\newcolumntype{L}{>{$}l<{$}}
\theoremstyle{definition}
\newtheorem{ndefn}{Definition}[section]
\newtheorem{nexap}[ndefn]{Example}
\theoremstyle{plain}
\newtheorem{nthm}[ndefn]{Theorem}
\newtheorem{nprop}[ndefn]{Proposition}
\newtheorem{nlem}[ndefn]{Lemma}
\newtheorem{ncor}[ndefn]{Corollary}
\theoremstyle{remark}
\newtheorem{nrk}[ndefn]{Remark}
\DeclarePairedDelimiter\floor{\lfloor}{\rfloor}
\DeclarePairedDelimiter\ab{\lvert}{\rvert}%
\DeclarePairedDelimiter\nm{\lVert}{\rVert}%
\let\oldabs\ab
\def\ab{\@ifstar{\oldabs}{\oldabs*}}
\let\oldnorm\nm
\def\nm{\@ifstar{\oldnorm}{\oldnorm*}}
\DeclareMathOperator{\cint}{int}
\DeclareMathOperator{\spn}{span}
\DeclareMathOperator{\rank}{rank}
\DeclareMathOperator{\diag}{diag}
\DeclareMathOperator{\Gr}{Gr}
\DeclareMathOperator{\OGr}{OGr}
\DeclareMathOperator{\Rev}{Rev}
\DeclareMathOperator{\Comp}{Comp}
\numberwithin{equation}{section}
\numberwithin{figure}{section}
\newcommand{\N}{\mathbb{N}}
\newcommand{\C}{\mathbb{C}}
\newcommand{\R}{\mathbb{R}}
\newcommand{\x}{\times}
\newcommand{\la}{\langle}
\newcommand{\ra}{\rangle}
\newcommand{\vfi}{\varphi}
\newcommand{\sus}{\subseteq}
\newcommand{\subs}{\subset}
\newcommand{\tl}{\tilde}
\newcommand{\ol}{\overline}
\newcommand{\ep}{\varepsilon}
\newcommand{\del}{\Delta}
\newcommand{\lra}{\longrightarrow}
\newcommand{\llra}{\longleftrightarrow}
\newcommand{\xra}{\xrightarrow}
\newcommand{\lmt}{\longmapsto}
\newcommand{\wo}{\setminus}
\newcommand{\mc}[1]{\mathcal{#1}}
\newcommand{\mb}[1]{\mathbf{#1}}
\newcommand{\bb}[1]{\mathbb{#1}}
\newcommand{\ts}{\textsuperscript}
\newcommand{\lt}{\left}
\newcommand{\rt}{\right}
\newcommand{\arxiv}[1]{\href{http://arxiv.org/abs/#1}{\texttt{arXiv:#1}}}
\begin{document}

\title{Sects and lattice paths over the Lagrangian Grassmannian}

\author[1]{Aram Bingham}
\author[2]{\"Ozlem U\u{g}urlu}

\affil[1]{{\small Tulane University, New Orleans; abingham@tulane.edu}}    
\affil[2]{{\small Palm Beach State College, Boca Raton; ugurluo@palmbeachstate.edu}}

\normalsize

\date{\today}
\maketitle

\begin{abstract}
	
We examine Borel subgroup orbits in the classical symmetric space of type $CI$, which are parametrized by skew symmetric $(n,n)$-clans. We describe bijections between such clans, certain weighted lattice paths, and pattern-avoiding signed involutions, and we give a cell decomposition of the symmetric space in terms of collections of clans called sects. The largest sect with a conjectural closure order is isomorphic (as a poset) to the Bruhat order on partial involutions.

\vspace{.2cm}

\noindent 
\textbf{Keywords:} Borel orbits, Levi subgroup,
Lagrangian Grassmannian, Bruhat order, lattice paths.\\ \\
\noindent 
\textbf{MSC: 05A15, 14M15, 14M17} 

\end{abstract}

\section{Introduction}\label{S:Introduction}
Let $G$ be simple algebraic group of classical type ($SL_n$, $SO_n$, or $Sp_{2n}$) over the complex numbers, and $\theta$ an automorphism of $G$ of order two. Then we call the fixed point subgroup $L:=G^\theta$ a \emph{symmetric subgroup} and $G/L$ a \emph{symmetric space} of classical type. If $B$ is a Borel subgroup of $G$, then $B$ acts on $G/L$ with finitely many orbits (\cite{matsukiOrbits}). The study of Borel orbits and their closures in symmetric spaces imitates and generalizes the study of Borel orbits in flag varieties, bearing comparable combinatorial richness. However, for only three types of classical symmetric spaces, $L$ happens to be a Levi subgroup of a (maximal) parabolic subgroup $P$; these are listed in Table~\ref{Table1} below. This makes it possible to relate the geometry and combinatorics of $B$-orbits in $G/L$ to those in $G/P$, via the ($B$-equivariant) canonical projection map, $\pi:G/L \to G/P$. 

In each of these cases, the homogeneous space $G/P$ parametrizes vector subspaces of $\C^n$ or $\C^{2n}$ which are isotropic with respect to a particular bilinear form, and is often called an \emph{(isotropic) Grassmannian} manifold/variety.  Indeed, the relevant symmetric spaces are those which are associated to a polarization of the appropriate vector space; see \cite{goodmanWallach} \S 11.3.5. The $B$-orbits of a Grassmannian are called \emph{Schubert cells}, as they are known to give a cell decomposition and an additive basis for (co)homology of the space $G/P$. Schubert cells can be parametrized by certain lattice paths which are also a tool for understanding their geometry.
\begin{table}[htp]\label{Table1}
	\centering
\begin{tabular}{@{}cccc@{}} \toprule
Type & Symmetric Pair & $B$-orbits parametrized by & $G/P$ \\
\midrule
$AIII$ & ($SL_{p+q}$, $S(GL_p \x GL_q))$ & $(p,q)$-clans & $\Gr(p, \C^{p+q})$ \\
$CI$ &( $Sp_{2n}$, $GL_n$ )& skew-symmetric $(n,n)$-clans & $\Lambda(n)$ \\
$DIII$ &( $SO_{2n}$, $GL_n$) & ``type $DIII$'' $(n,n)$-clans & $\OGr(n, \C^{2n})$ \\
\bottomrule
\end{tabular}
\caption{Classical symmetric subgroups which are also spherical Levi subgroups.}
\end{table}

$B$-orbits in $G/L$ are parametrized by objects dubbed \emph{clans} in \cite{matsukiOshima}, which have morphed in their development through subsequent works, notably \cite{yamamotoOrbits}, \cite{wyserThesis}, and \cite{canGenesis}. In \cite{bcSects}, it was shown that $\pi$ gives $G/L$ the structure of an affine bundle over $G/P$, and that the pre-images of Schubert cells provide a cell decomposition of $G/L$. This is used to conclude that the integral Chow rings and cohomology rings of $G/L$ are isomorphic. The type $AIII$ case is also treated in detail there, where the pre-images of Schubert cells in the Grassmannian of $p$-planes in $\C^{p+q}$, denoted $\Gr(p, \C^{p+q})$, are comprised of collections of \hbox{$(p,q)$-clans} called \emph{sects}. Each sect contains a unique closed $B$-orbit and a unique dense \hbox{$B$-orbit.} The closures of the dense \hbox{$B$-orbits} of each sect form a generating set for the integral Chow ring $A^*(G/L)$, akin to Schubert varieties. 

In this paper, we apply the ideas of \cite{canGenesis} and \cite{bcSects} to the symmetric space of type $CI$, wherein the ambient group is the symplectic group $Sp_{2n}$ and the symmetric subgroup is isomorphic to $GL_n$. Our first result is Theorem \ref{thm:restricted}, which counts Borel orbits in these symmetric spaces by providing a bijection between the parametrizing set of ``skew-symmetric'' \hbox{$(n,n)$-clans} and a set of pattern-avoiding signed involutions with known generating function. In Section \ref{C5:S4}, we describe another bijection of skew symmetric $(n,n)$-clans with a certain class of \emph{weighted $(n,n)$ Delannoy paths}. These are lattice paths in the plane from the origin to the point $(n,n)$, consisting only of north, east, and northeast diagonal steps, where the diagonal steps can have certain whole number weights. 

In Section \ref{sec:sects}, we describe the sects over the Schubert cells of the Lagrangian Grassmannian $\Lambda(n)$, which is the moduli space of maximal isotropic subspaces of the vector space $\C^{2n}$ with symplectic form $\Omega$. This proceeds in a fashion similar to the type $AIII$ case described in \cite{bcSects}, where it was also shown that the pre-image of the dense Schubert cell, called the \emph{big sect}, is isomorphic as a poset to the \emph{rook monoid} $\mc{R}_n$ with the Bruhat-Chevalley-Renner order. That result relied on a combinatorial description, given by Wyser in \cite{wyser2016bruhat}, of the closure order on $B$-orbits in the type $AIII$ symmetric space. 

For a classical symmetric space whose $B$-orbits are parametrized by a certain family of clans, one can describe the closure poset of clans by the order relation 
\[\gamma \leq \tau \iff \ol{Q_\gamma} \sus \ol{Q_\tau}\]
for clans $\gamma$, $\tau$, with corresponding $B$-orbits $Q_\gamma$, $Q_\tau$.  If $X$ is a classical symmetric space of type $B$, $C$, or $D$, then $X$ embeds in some symmetric space $X'$ of type $AIII$. The clans parametrizing $B$-orbits in $X$ can then be viewed as a subset of the $(p,q)$-clans parametrizing $B'$-orbits in $X'$, where $B'$ is a Borel subgroup of $SL_{p+q}$ and $B$ is the intersection of $B'$ and the relevant symplectic or special orthogonal subgroup. This reflects the fact that a $B$-orbit in $X$ indexed by a clan $\gamma$ is exactly the intersection of $X$ with the $B'$-orbit of $X'$ corresponding to the same clan (\cite{wyserThesis}, Theorem 1.5.8).

From this, one could hope that the closure order on clans in a type $B$-$C$-$D$ symmetric space would simply be the restriction of the relevant type $AIII$ closure order. This has been conjectured to be the case in types $BI$, $CI$, and $CII$, but it is known to fail in types $DI$ and $DIII$ (\cite{wyserKorbit}, \S 3.2.2). For this, among other reasons, the analysis for the type $DIII$ symmetric space warrants separate treatment.  Nevertheless, there is a \emph{weak order} on clans of a given type, whose order relations are contained within the full closure order, and which can be used to recover the closure order through a simple recursive procedure (see \cite{mcgovern2009pattern}). But the procedure does not appear to easily prove the conjectural closure order for type $CI$, so we proceed without it.

Our main result of Section~\ref{sec:big} says that the big sect over $\Lambda(n)$ with the conjectural closure order is isomorphic, as a poset, to the partial involutions on $n$ letters $\mc{P}_n$ with the \emph{Bruhat order} of \cite{bagnocongruence}. The order relations of the latter poset are given by the closure order on congruence orbits of upper triangular matrices acting on symmetric matrices, and they have a convenient combinatorial description which is provided below. A geometric argument explaining this coincidence and verifying that the closure relations within the big sect are indeed those of the congruence action will appear in the first author's Ph.D. thesis, along with analogous arguments for the two types. In type $DIII$, the orthogonal Grassmannian $\OGr(n,\C^{2n})$ of maximal isotropic subspaces with respect to a non-degenerate, symmetric, bilinear form appears as the base space of the bundle $\pi:G/L \to G/P$. Combinatorial analysis and description of the sects for this case will appear in forthcoming work from the authors.

\section{Notation and Preliminaries}\label{C5:S0}

All matrix groups in this paper are taken to have entries in the field of complex numbers. Let $n$ be a positive integer.
First, we must describe our realization of the type $CI$ symmetric pair $(Sp_{2n}, GL_n)$, borrowing notation from \cite{wyserThesis}. Let $J_n$ denote an $n \x n$ matrix with 1's along the anti-diagonal and 0's elsewhere. Let 
\[\Omega = \begin{pmatrix}
0 & J_n \\
-J_n & 0 
\end{pmatrix}.
\]
Then we set 
\begin{equation} \label{eq:Sp}
G := Sp_{2n} = \{ g\in GL_{2n} \mid  g^t \Omega g = \Omega \}. 
\end{equation}

Let $\cint(g): GL_{2n} \to GL_{2n}$ denote the map defined by 
\[ \cint (g)(h)= ghg^{-1} .\]
Now define the matrix 
\[I_{n,n} := \begin{pmatrix}
I_n & 0 \\
0 & -I_n 
\end{pmatrix}, \]
where $I_n$ denotes the $n\x n$ identity matrix.  Then we have an automorphism $\theta$ of order two on $GL_{2n}$ defined by $\theta:=\cint(iI_{n,n})$.
Indeed, \mbox{$(iI_{n,n})^{-1}=-i I_{n,n}$,} so if 
\[g=\begin{pmatrix}
A & B\\ 
C & D
\end{pmatrix}\]
is the $n \x n $ block form of $g$, we have
\[\theta(g)=
\begin{pmatrix}
i I_n & 0 \\ 
0 & -i I_n
\end{pmatrix}
\begin{pmatrix}
A & B \\
C & D 
\end{pmatrix}
\begin{pmatrix}
-i I_n & 0 \\ 
0 & i I_n
\end{pmatrix}
= \begin{pmatrix}
A & -B \\
-C & D
\end{pmatrix}. \]

Observe that the restriction of $\theta$ to $Sp_{2n}$ induces an order two automorphism on this group as well, since \mbox{$iI_{n,n}\in Sp_{2n}$.} The fixed points of $\theta$ must be block diagonal, that is
\[ \theta(g)=g \iff g= \begin{pmatrix}
A & 0 \\
0 & D
\end{pmatrix},\]
while membership in a symplectic group also forces $D=J_n(A^{-1})^tJ_n$. Thus, $A$ can be any invertible $n \x n$ matrix, and this completely determines $g$, so the fixed point subgroup $L:=G^\theta$ is isomorphic to $GL_n$, giving a type $CI$ symmetric pair.

Now, let us give a brief description of our involution notation. The symmetric group of permutations on $[n]:=\{1,\dots, n\}$ is denoted by $\mc{S}_n$. 
For instance, $\pi = (2, 6) (3, 4) (5, 7) (1) (8)$ is an example of a permutation from $\mc{S}_8$ which is written in cycle notation. 
If $\pi \in \mc{S}_n$, then its one-line notation 
is the string $\pi_1\pi_2\dots \pi_n$,
where $\pi_i = \pi(i)$ for $1\leq i \leq n$.
Based on this description, it is easy to see that the $\pi$ given above can be written as $\pi=16437258$ in one-line notation.

An \emph{involution} is an element of $\mc{S}_n$ of order at most $2$, and the set of involutions in $\mc{S}_n$ is denoted by $\mc{I}_n$. Let $\pi \in \mc{I}_n$ be an involution. Since we often need
the data of fixed points (1-cycles) of $\pi$, we always include them when writing $\pi$ in cycle notation. Thus, our standard form
for $\pi$ will be
$$
\pi = (a_1, b_1)(a_2, b_2) \dots (a_k, b_k) (d_1)\dots (d_{n-2k}),
$$ 
where $a_i < b_i$ for all $1 \leq i \leq k$, $a_1 < a_2 < \dots < a_k$, and $d_1<\dots < d_{n-2k}$. The example $\pi \in \mc{S}_8$ above is written in standard form.
\begin{ndefn} 
	A \emph{signed $(p,q)$-involution} is an involution $\pi\in \mc{I}_{p+q}$ with an assignment of $+$ and $-$ signs to the fixed points of $\pi$ 
	such that there are $p - q $ more $+$'s than $-$'s, where $q \leq p$. 
\end{ndefn}

For example, $\pi = (2, 6) (3, 4) (5, 7) (1^{-}) (8^{+})$ is a signed $(4, 4)$-involution. Observe here that $p$ is equal to the number of fixed points in $\pi$ with a $+$ sign attached plus the number of two-cycles in $\pi$, while $q$ is equal to the number of fixed points in $\pi$ with a $-$ sign attached plus the number of two-cycles in $\pi$. 
Next, we present $(p,q)$-clans.
\begin{ndefn}
	Let $p$ and $q$ be two positive integers and set $n:=p+q.$
	Suppose that $q \leq p$. A \emph{$(p,q)$-clan} $\gamma=c_1\cdots c_n$ is a string  
	of $n$ symbols from $\N\cup \{+,-\}$ such that
	\begin{enumerate}
		\item there are $p-q$ more $+$'s than $-$'s;
		\item if a natural number appears in $\gamma$, 
		then it appears exactly twice. 
	\end{enumerate} 
\end{ndefn}
For example, $12{+}21$ is a $(3,2)$-clan and ${+}1{+}1$ is a $(3,1)$-clan. We consider clans $\gamma$ and $\gamma'$ to be equivalent if the positions of every pair of matching numbers are the same in each.  For example, $\gamma:=1122$ and $\gamma':=2211$ are the same $(2,2)$-clan, since both $\gamma$ and $\gamma'$ have matching numbers in the positions $(1,2)$ and $(3,4)$. 

If $\gamma=c_1\cdots c_n$, then the \emph{reverse} of $\gamma$,
denoted by $rev(\gamma)$,
is the clan 
$$
rev(\gamma) = c_nc_{n-1}\cdots c_1.
$$
We take $-\gamma$ to be the clan obtained from $\gamma$ by changing all $+$'s to $-$'s, and vice versa.
\begin{ndefn} \label{ssClans}
	A $(p,q)$-clan $\gamma$ is called \emph{skew-symmetric} if 
	$$\gamma =  - rev(\gamma).$$
\end{ndefn} 
It is clear that skew-symmetric $(p,q)$-clans are only possible when $p=q$. Let us illustrate this definition with the following examples. 
\begin{nexap}
	Consider the clan $\gamma ={+}{-}123 3 1 2{+}{-}$ which has $rev(\gamma)= {-}{+}2 1 3 3 2 1{-}{+}$. Since $\gamma = -rev (\gamma)$, it is a skew-symmetric $(5, 5)$-clan. 
	
	The clan $\tau = 1 2 3 4 5 4 5 3 2 1$ is a skew-symmetric $(5,5)$-clan as well. In fact, since it has no $\pm$ symbols, $rev(\tau) = -rev(\tau)$. 
\end{nexap}
There is a one-to-one correspondence between $(p,q)$-clans and signed $(p,q)$-involutions; detailed proof can be found in~\cite{canGenesis}. Returning to our first example, the signed $(4,4)$-involution \hbox{$\pi = (2, 6) (3, 4) (5, 7) (1^{-}) (8^{+})$} can be regarded as the $(4, 4)$-clan ${-}122313{+}$. This is accomplished by placing matching natural numbers at the positions that appear in each transposition, and placing the signature $+$ or $-$ at the position of each signed fixed point. Observe here that $p$ is equal to the number of fixed points in $\pi$ with a $+$ sign attached plus the number of two-cycles in $\pi$, while $q$ is equal to the number of fixed points in $\pi$ with a $-$ sign attached plus the number of two-cycles in $\pi$. 
In the opposite direction, for example, the skew-symmetric $(5,5)$-clan $\tau=1 2 3 4 5 4 5 3 2 1$ becomes the signed $(5, 5)$-involution $(1, 10) (2,9)(3,8) (4, 6) (5, 7)$.

\section{Counting Skew-Symmetric $(n, n)$-Clans}\label{C5:S1}
If $B$ is a Borel subgroup of $Sp_{2n}$, then the $B$-orbits of the classical symmetric space $Sp_{2n}/GL_n$ are parameterized by skew-symmetric $(n,n)$-clans (\cite{yamamotoOrbits}, Theorem 3.2.11). Here, we obtain a formula for the number of Borel orbits in a type $CI$ symmetric space by counting skew-symmetric clans.

Let $\mc{Z}_n$ denote the set of all skew-symmetric $(n,n)$-clans and $Z_n$ denote its cardinality. Let $\zeta_{k,n}$ denote the number of such clans which contain $k$ pairs of natural numbers. In order to count $Z_n$, first we will count $\zeta_{k,n}$.


To determine an $(n,n)$-clan with $k$ pairs of natural numbers involves a placement of $\pm$ symbols in $2n-2k$ spots. By skew-symmetry, it is enough to focus on the first half of the string of length $2n$. Then, there are $n-k$ spots among the first half which can be $+$ or $-$, giving $2^{n-k}$ possibilities.
We have just proved the following lemma:
\begin{nlem} \label{pmlemma}
	There are $2^{n-k}$ ways of placing $\pm$ symbols among $2n-2k$ spots to obtain skew-symmetric $(n,n)$-clans with $k$ pairs of matching natural numbers.
\end{nlem}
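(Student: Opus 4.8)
The plan is to make rigorous the counting argument already sketched in the paragraph preceding the lemma, by exploiting skew-symmetry to reduce the problem to an independent choice of signs in the first half of the clan string. First I would fix a skew-symmetric $(n,n)$-clan $\gamma = c_1 \cdots c_{2n}$ containing exactly $k$ pairs of matching natural numbers, so that $2n - 2k$ of the positions are occupied by $\pm$ symbols. The key structural fact to extract from Definition~\ref{ssClans} is that the condition $\gamma = -\mathrm{rev}(\gamma)$ forces $c_{2n+1-i} = -c_i$ whenever $c_i$ is a sign, and forces matching natural numbers to sit in positions that are symmetric under $i \mapsto 2n+1-i$ (with the pairing reversed). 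The upshot is that the entire second half $c_{n+1}\cdots c_{2n}$ is completely determined by the first half $c_1 \cdots c_n$: a $+$ in position $i \leq n$ forces a $-$ in position $2n+1-i$, and the placement of each natural-number pair in the first half determines its mirror in the second half.

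Next I would argue that, among the first $n$ positions, exactly $n-k$ are sign positions. This is the one point that deserves care, so I would spell it out: of the $k$ matching pairs, by skew-symmetry each pair either lies entirely within one half (contributing two number-positions to that half) or straddles the center (contributing one number-position to each half); a short parity/counting check shows that the number of sign positions in the first half is exactly $n - k$, matching the total count $2n - 2k$ of sign positions split symmetrically. Once this is established, each of those $n-k$ positions may independently be assigned a $+$ or a $-$ with no further constraint, since any such assignment extends uniquely to a skew-symmetric clan by the determination of the second half described above.

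I would then conclude that the number of ways to fill in the signs is $2^{n-k}$, because the $n-k$ sign positions in the first half are free binary choices and the second half is forced. To phrase this as a clean bijection, I would set up a map from sign-assignments on the $n-k$ first-half positions to skew-symmetric $(n,n)$-clans with the given number-placement, show it is well-defined (the extension is genuinely skew-symmetric, including checking that no sign-balance condition is violated, which is automatic since every $+$ is mirrored by a $-$), and show it is a bijection by exhibiting the inverse (restriction to the first half). Counting the domain gives $2^{n-k}$.

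The main obstacle I anticipate is the bookkeeping in the second paragraph — verifying that the first half contains exactly $n-k$ sign positions regardless of how the $k$ pairs distribute between straddling and non-straddling configurations, and confirming that the skew-symmetry condition never imposes a hidden constraint (such as a forbidden self-paired center position) that would reduce the count below $2^{n-k}$. Everything else is a routine consequence of the $i \mapsto 2n+1-i$ symmetry. Since the lemma statement already isolates ``$2n-2k$ spots'' and ``$n-k$ spots among the first half,'' the proof is essentially the observation that these free sign choices are unconstrained and mirror-determined, so I would keep it brief.
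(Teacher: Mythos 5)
Your proposal is correct and takes essentially the same route as the paper's own (very terse) argument preceding the lemma: skew-symmetry forces the second half of the clan to be the mirror of the first, exactly $n-k$ of the first $n$ positions are sign positions, and these are unconstrained binary choices, giving $2^{n-k}$. The details you add — that the involution $i \mapsto 2n+1-i$ places exactly $k$ number-positions in each half, and that the mirrored signs automatically satisfy the $+$/$-$ balance — are precisely the points the paper leaves implicit.
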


\begin{nrk}\label{5:Rempm} 
Definition \ref{ssClans} implies that the number of $+$ signs among the first $n$ symbols of a given skew-symmetric $(n,n)$-clan must equal to the number of $-$ signs among the last $n$. 
\end{nrk}

\begin{nrk}\label{5:ineq}
The number of pairs of natural numbers in an $(n,n)$-clan is, of course, bounded between 0 and $n$.
\end{nrk}
Our next task is to determine the number of possible ways of placing $k$ pairs of natural numbers to build a skew-symmetric $(n,n)$-clan $\gamma=c_1 \cdots c_n c_{n+1}\cdots c_{2n}$. Together with the lemma, this will yield our main result for this section. 

\begin{nthm}\label{5:kcycles}
	For every nonnegative 
	integer $k$ with $k\leq n$, we have
	\begin{align}\label{5:ssclank}
	\zeta_{k,n} = 2^{n-k}{n \choose k} a_{k}, \;\;\;\;
	\text{where} \;\;\;\;
	a_{k} := \sum_{b=0}^{\floor{k/2}} {k \choose 2b} \frac{(2b)!}{b!},
	\end{align}
so that
	$$
	Z_n  = \sum_{k=0}^n \zeta_{k,n} = \sum_{k=0}^n  2^{n-k}{n \choose k} a_{k}.
	$$
\end{nthm}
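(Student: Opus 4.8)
The plan is to separate the placement of natural numbers from the placement of signs, the latter already handled by Lemma~\ref{pmlemma}. By that lemma it remains to show that the number of ways to place the $k$ matching pairs of natural numbers in a skew-symmetric $(n,n)$-clan is $\binom{n}{k} a_k$; multiplying by the $2^{n-k}$ sign placements then yields $\zeta_{k,n}$, and summing over $0 \le k \le n$ (Remark~\ref{5:ineq}) gives $Z_n$.

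First I would record the combinatorial meaning of skew-symmetry. Write $w_0$ for the reversal $i \mapsto 2n+1-i$ on the position set $[2n]$; it is a fixed-point-free involution that partitions $[2n]$ into the $n$ \emph{symmetric pairs} $P_j := \{j,\, 2n+1-j\}$, $1 \le j \le n$. Identifying the natural-number part of a clan with the partial matching $M$ on $[2n]$ whose edges are the positions of equal numbers, the condition $\gamma = -\mathrm{rev}(\gamma)$ of Definition~\ref{ssClans} says exactly that $M$ is invariant under $w_0$, since negation leaves $M$ untouched and only interchanges $\pm$ (whose count is the content of Lemma~\ref{pmlemma}). I would then observe the crucial rigidity: if $\{i,m\} \in M$ then $\{w_0(i), w_0(m)\} \in M$, so each symmetric pair $P_j$ is either entirely covered by $M$ or entirely free for signs; no splitting occurs. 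This is what cleanly decouples the natural-number count from the sign count.

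Next I would classify the $w_0$-orbits of edges of $M$. An edge $e$ is either fixed by $w_0$, forcing $e = P_j$ for some $j$ (a \emph{symmetric edge}), or it lies in a $2$-orbit $\{e, w_0(e)\}$ whose four endpoints are exactly two distinct symmetric pairs $P_i, P_j$ (an \emph{asymmetric orbit}). In both cases the number of symmetric pairs consumed equals the number of edges contributed: $1$ for a symmetric edge, $2$ for an asymmetric orbit. Hence a $w_0$-invariant matching with $k$ edges consumes exactly $k$ of the $n$ symmetric pairs, which may be chosen in $\binom{n}{k}$ ways. Given the chosen $k$ pairs, the factor $a_k$ counts the internal structure: writing $b$ for the number of asymmetric orbits, one selects the $2b$ pairs they use in $\binom{k}{2b}$ ways, matches these into $b$ unordered blocks in $(2b-1)!! = \tfrac{(2b)!}{2^b\, b!}$ ways, and connects each block into an asymmetric orbit in $2$ ways (the two $w_0$-invariant cross matchings of four points), while the remaining $k-2b$ pairs become symmetric edges. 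This gives $\binom{k}{2b}\,\tfrac{(2b)!}{2^b b!}\,2^{b} = \binom{k}{2b}\tfrac{(2b)!}{b!}$, and summing over $b$ recovers $a_k$.

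The two small enumerations (the double factorial for pairing blocks and the count of exactly $2$ cross matchings per block) are routine. The step I expect to require the most care is the translation of Definition~\ref{ssClans} into $w_0$-invariance together with the no-splitting claim, since one must argue that equivalence of clans up to relabelling of matching numbers makes $-\mathrm{rev}(\gamma) = \gamma$ a condition purely on the matching $M$ and on the signs separately. Once that decoupling is justified, Lemma~\ref{pmlemma} supplies the $2^{n-k}$ factor with no interaction, the factorization $\zeta_{k,n} = 2^{n-k}\binom{n}{k} a_k$ follows, and the closed form for $Z_n$ is immediate by summation.
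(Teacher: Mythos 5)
Your proposal is correct and follows essentially the same route as the paper's proof: the paper's ``families'' (of type $\Pi_{1,1}$ or $\Pi_{1,2}$) and ``stand-alone'' $\Pi_2$ pairs are exactly your asymmetric orbits and symmetric edges, and both arguments produce the identical factorization $\binom{n}{k}\cdot\binom{k}{2b}\cdot\tfrac{(2b)!}{2^b b!}\cdot 2^b$ before invoking Lemma~\ref{pmlemma} for the $2^{n-k}$ sign placements. Your formulation via $w_0$-invariant matchings merely makes explicit two points the paper leaves implicit (the no-splitting of symmetric position pairs, and the fact that each block of two symmetric pairs admits exactly two invariant cross matchings), which is a welcome tightening but not a different proof.
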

\begin{proof}
	Let us first define the following interrelated sets:
	\begin{align*}
	\Pi_{1,1} :=& \{((i,j),(2n+1-j, 2n+1-i))\;|\; 1 \leq i < j \leq n \}, \\
	\Pi_{1,2} :=& \{((i,j),(2n+1-j, 2n+1-i))\;|\; 1 \leq i \leq n < j \leq 2n \}, \\
	\Pi_1 :=& \Pi_{1,1} \cup \Pi_{1,2},\\
	\Pi_2 :=& \{(i,j) \;|\; 1 \leq i \leq n < j \leq 2n, \; i+j = 2n+1 \}.
	\end{align*}
	Given a skew-symmetric $(n,n)$-clan $\gamma$, $\Pi_1$ is the set of possible placeholders for two distinct pairs of natural numbers that determine each other in $\gamma$. We will refer to these pairs of pairs as \emph{families}, after \cite{matsukiOshima}. 
	The set $\Pi_2$ corresponds to the list of possible
	``stand-alone'' pairs in $\gamma$. 

	If $(c_i,c_j)$ is a pair of matching natural numbers in the skew-symmetric clan $\gamma$
	and if $(i,j)$ is an element of $\Pi_2$, then we call $(c_i,c_j)$
	a pair of type $\Pi_2$. 
	If $((c_i,c_j),(c_{2n+1-j},c_{2n+1-i}))$ is a family in a skew-symmetric clan $\gamma$ and if \hbox{$((i,j),(2n+1-j,2n+1-i))\in \Pi_{1,s}$} ($s\in \{1,2\}$),
	then we call it a family of type $\Pi_{1,s}$.
	
	To illustrate these sets, consider the $(7,7)$-clan $$1{+}1 3 4{-}5 5{+}3 4 2{-}2.$$ For this clan,
	$((c_1,c_3), (c_{12}, c_{14}))$ is a type $\Pi_{1,1}$ family, $((c_4, c_{10}), (c_5, c_{11}))$ is a type $\Pi_{1,2}$ family, and $(c_7,c_8)$ is a $\Pi_2$ pair.

	Clearly, if a clan $\gamma$ has $ b$ many families and $a$ many $\Pi_2$ pairs, then $2b + a= k$ is 
	the total number of pairs in our skew-symmetric clan $\gamma$. 
	To see how many different ways there are in which these pairs of indices can 
	be situated in $\gamma$, we start by choosing $k$ spots
	from the first $n$ positions in $\gamma = c_1\cdots c_{2n}$. Obviously, this can be done in ${n\choose k}$ many different ways, and each choice made in the first half determines the second half of the clan uniquely.
	
	There are ${k\choose {2b}}$ possibilities for choosing which of these $k$ spots will be occupied by symbols from families. Then, we form $b$ pairs among these $2b$ elements which can be done in $\frac{(2b)!}{b!2^b}$ ways. But each of these pairs can either be in a $\Pi_{1,1}$ family or a $\Pi_{1,2}$, family, so we multiply by these $2^b$ additional options, resulting in a factor of $\frac{(2b)!}{b!}$. Observe that choosing these pairs is equivalent to choosing $(i,j)$ for the families in $\Pi_{1,1}$ and choosing $(i, 2n+1-j)$ for the families in $\Pi_{1,2}$. Once this is done, finally, the remaining $k-2b$ spots will be filled by the first symbols of the $a$ pairs of type $\Pi_2$. This can be done in only one way. 
	
Therefore, in summary, the number of different ways of placing $k$ pairs to build a skew-symmetric $(n,n)$-clan $\gamma$ is given by 
	\begin{align*}
		{n \choose k} \sum_{b=0}^{  \floor*{\frac{k}{2}}} {k \choose 2b} \frac{(2b)!}{b!},
	\hspace{.5cm} \text{ or equivalently, }  \hspace{.5cm}
	{n \choose k} \sum_{b=0}^{  \floor*{\frac{k}{2}}} {k \choose 2b}{2b \choose b} b! . 
	\end{align*}
Combining this with Lemma \ref{pmlemma} yields the formula for $\zeta_{k,n}$.
\end{proof}

\begin{nrk}
	By a straightforward calculation, it is easy to check that the recurrence
	\begin{align}\label{5:akrec}
	a_k = a_{k-1} + 2(k-1) a_{k-2},
	\end{align}
	 holds for all $k \geq 2$, with $a_0 = a_1 = 1$.
\end{nrk}

\begin{nexap}
	All the possible skew-symmetric $(3,3)$ clans are:
	\begin{center}
		${-} {-}{-}{+}{+}{+}, {+}{+}{+}{-} {-} {-}, {+} {+} {-} {+} {-} {-}, {+} {-} {+} {-} {+} {-}, {-} {+} {+} {-} {-} {+}, 
		{-}{-} {+} {-}{+} {+}$,\\
		${-} {+} {-} {+} {-} {+}, {+} {-} {-} {+} {+} {-}, 1 {+} {+} {-}{-} 1, {+}1{+} {-} 1{-}, {+} {+} 1 1 {-} {-}, 1 {-} {-} {+} {+}1$,\\
		${-} 1 {-} {+} 1 {+}, {-}{-}1 1{+} {+}, 1 {+}{-}{+}{-}1 , 1{-}{+} {-} {+}1, {-}1{+}{-}1 {+}, {+}1 {-}{+}1{-}$, \\ 
		${+}{-}11{+} {-}, {-}{+}11 {-}{+},  {+}1122 {-}, 1 1{-}{+}2 2, 1{-}12 {+} 2, {-}1122 {+}, 1 2 {+} {-} 1 2$, \\
		${-}1212{+},  12{-}{+} 12, 1{+}21 {-} 2, {+}1 21 2 {-}, 1{+}12{-} 2, 1 1 {+} {-}2 2, 1 2 {+} {-} 2 1$,\\
		$1{+}22{-}1, {+} 1 221{-}, 1 2{-} {+}21, 1{-}22{+} 1, {-} 122 1{+}, 1{-}21 {+} 2, 12 3321$, \\
		$12 31 23, 1 21 32 3, 1 2 32 31, 11 2 2 33, 123312, 122 3 31$.
	\end{center}
	Thus, there are $45$ skew-symmetric $(3,3)$ clans.
\end{nexap}

The first few values of $Z_n$ are $1, 3, 11, 45, 201, 963, 4899, 26253, 147345, 862083, 5238459, \dots$ This is also the number of signed involutions on $2n$ letters that are equal to their reverse-complements and avoid the pattern $\bar{2}\bar{1}$  (\cite{OEIS}, \href{http://oeis.org/A083886}{A083886}). We will refer such objects as \emph{restricted involutions} and explain the terminology used to define them in the next subsection.

\subsection{Restricted Involutions}

In this section, we will show that the elements of $\mc{Z}_n$ are in one-to-one correspondence with the set of restricted involutions $\hat{\mc{I}}_n$, which we introduce below. Our main reference for this section is~\cite{dukessigned}. 
Recall that the elements of the hyperoctahedral group $\mc{C}_n$ can be considered as signed permutations and
written as $\hat{\pi} = \hat{\pi}(1) \hat{\pi}(2) \dots \hat{\pi}(n)$, where each of the symbols $1, 2,\dots , n$ appears, possibly barred to indicate a negative.
For instance, there are 8 elements in $\mc{C}_2$, which are $12, 1 \bar{2}, \bar{1} 2, \bar{1} \bar{2}, 21, 2 \bar{1}, \bar{2} 1$, and $\bar{2} \bar{1}$, in one-line notation.

We define the absolute value of each symbol $\hat{\pi}(i)$ by
$$|\hat{\pi}(i)| = 
\begin{cases}
\hat{\pi}(i) \;\;\text{if} \;\; \hat{\pi}(i) \;\; \text{is positive,} \\
\overline{\hat{\pi}(i)} \;\; \text{otherwise},
\end{cases}$$
for any $1 \leq i \leq n$ and define the absolute value of a signed permutation $\hat{\pi}$ as the one obtained by taking the absolute value of each of its entries.
For example, if $\hat{\pi} = \ol{1} 2$ then $|\hat{\pi}|$ = 12. Indeed, since $\ol{\ol{1}} =1$, we have $|\hat{\pi}(1)|=1$. 

\begin{ndefn}
Let $\hat{\rho} \in \mc{C}_k$ and $\hat{\pi} \in \mc{C}_n$. We say that $\hat{\pi}$ contains a signed pattern $\hat{\rho}$, or is a \mbox{$\hat{\rho}$-containing}
signed permutation, if there is a sequence of $k$ indices, $1\leq i_1 < i_2 < \cdots < i_k \leq n$ such that
two conditions hold:
\begin{enumerate}
	\item $|\hat{\pi}({i_p})| > |\hat{\pi}({i_q})|$ if and only if $|\hat{\rho}(p)| > |\hat{\rho}(q)|$ for all $k \geq p > q \geq 1$;
	\item  $\hat{\pi}({i_j})$ is barred if and only if $\hat{\rho}(j)$ is barred for all $1 \leq j \leq k$.
\end{enumerate}
A signed permutation $\hat{\pi}$ which does not contain such a pattern $\hat{\rho}$ is said to \emph{avoid} $\hat{\rho}$.
\end{ndefn}

For example, $\hat{\pi} = 21 \bar{3} \bar{4} \in \mc{C}_4$ contains the signed pattern $\bar{1} \bar{2}$ but does not contain the pattern 12. 

\begin{ndefn}
A \emph{signed involution on $n$ letters} is an element of $\mc{C}_n$ such that its cycle
representation contains cycles of either two unbarred symbols or two barred symbols. 
\end{ndefn}

We define two simple operations on signed permutations. Given $\hat{\pi}= \hat{\pi}(1)\hat{\pi}(2) \dots \hat{\pi}(n)$ we consider the: 
\begin{itemize}
	\item Reverse permutation: $\Rev(\hat{\pi}$) = $\hat{\pi}(n) \dots \hat{\pi}(2) \hat{\pi}(1)$
	\item Complement permutation: $\Comp(\hat{\pi}$) $=\hat{\rho}(1) \hat{\rho}(2) \dots \hat{\rho}(n)$ where $\hat{\rho}(i) = n+1-\hat{\pi}(i)$ if $\hat{\pi}(i)>0$ and $-(n+1)+\hat{\pi}(i)$ otherwise.
\end{itemize}
These operations commute, allowing us to define our set of interest, called restricted involutions.
\begin{ndefn}
	A signed involution on $2n$ letters $\hat{\pi}$ is called a \emph{restricted involution} if it is equal to its reverse complement and avoids the pattern $\bar{2} \bar{1}$. The set of all such involutions for fixed $n$ will be denoted by $\hat{\mathcal{I}}_n$ and its cardinality by $\hat{I}_{n}$. 
\end{ndefn}
For example, $12, \bar{1} \bar{2}$, and $21$ are the restricted involutions of $\mc{C}_2$, which are the members of $\hat{\mc{I}}_1$.
We will prove that $\hat{{I}}_{n}= Z_n$ by exhibiting an explicit bijection between
skew-symmetric $(n, n)$-clans and the restricted involutions of $\mc{C}_{2n}$. This will allow us to make use of a
recurrence relation known for restricted involutions to give a generating function for the number of skew-symmetric $(n, n)$-clans and the orbits they parametrize.

First, we state the recurrence relation;
\begin{nprop}\label{resinvrec}
	Taking $\hat{{I}}_0 = 1$ and $\hat{{I}_1} =3$, the numbers $\hat{{I}}_n$ satisfy the following recurrence relation;
	\begin{center}
		$\hat{{I}}_n = 3\hat{{I}}_{n-1}+ 	2(n-1)\hat{{I}}_{n-2}$,
	\end{center}
	for all $n\geq 2$.
\end{nprop}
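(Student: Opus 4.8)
The plan is to prove the recurrence by a direct combinatorial decomposition of restricted involutions, peeling off an outer ``shell'' of positions and values and reducing to smaller restricted involutions. The first step is a structural reduction of the pattern condition. I would show that a signed involution avoids $\bar{2}\bar{1}$ if and only if it has no negative $2$-cycles, i.e.\ its only barred symbols are negative fixed points. Indeed, the barred entries of a signed involution come either from negative fixed points (a value $\bar{p}$ in position $p$) or from negative $2$-cycles: two negative fixed points in positions $p < p'$ read as $\bar{p},\bar{p'}$ and form a $\bar{1}\bar{2}$, never a $\bar{2}\bar{1}$, whereas a negative $2$-cycle on positions $p<q$ places $\bar{q}$ in position $p$ and $\bar{p}$ in position $q$, which is exactly a $\bar{2}\bar{1}$. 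So restricted involutions are precisely the reverse-complement-invariant signed involutions whose nontrivial cycles are all positive $2$-cycles, with positive and negative fixed points freely allowed.

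Next I would record the effect of reverse-complement invariance on the outer positions. Writing the complement of a value $v$ as $\mathrm{comp}(v) = \mathrm{sgn}(v)\,(2n+1-|v|)$, invariance under $\Rev\circ\Comp$ forces $\hat{\pi}(2n) = \mathrm{comp}(\hat{\pi}(1))$, so the value in position $1$ determines the value in position $2n$. I then split on $\hat{\pi}(1)$. If $|\hat{\pi}(1)| \in \{1,2n\}$ the outer shell closes on itself: the admissible options are $\hat{\pi}(1) = 1$ (forcing $\hat{\pi}(2n)=2n$), $\hat{\pi}(1)=\bar{1}$ (forcing $\hat{\pi}(2n)=\overline{2n}$), and $\hat{\pi}(1) = 2n$ (the single positive $2$-cycle $(1,2n)$), while $\hat{\pi}(1) = \overline{2n}$ is excluded since it is a negative $2$-cycle. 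Each of these three cases deletes positions $1,2n$ and the absolute values $1,2n$, leaving a restricted involution on $2n-2$ letters, which yields the term $3\hat{I}_{n-1}$. If instead $|\hat{\pi}(1)| = m$ with $2 \le m \le 2n-1$, then position $1$ lies in a $2$-cycle with the inner position $m$, and reverse-complement invariance forces a second $2$-cycle between position $2n$ and position $m' = 2n+1-m$. Here only the positive sign survives, because $\hat{\pi}(1) = \bar{m}$ would again be a negative $2$-cycle (equivalently, would create a $\bar{2}\bar{1}$). Deleting the four outer positions $1,m,m',2n$ leaves a restricted involution on $2n-4$ letters, and since $m$ ranges over the $2n-2 = 2(n-1)$ inner positions this yields the term $2(n-1)\hat{I}_{n-2}$. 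Together with the base cases $\hat{I}_0 = 1$ and $\hat{I}_1 = 3$, this gives the stated recurrence.

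The main technical point to nail down is that the shell-deletion maps are genuine bijections onto the smaller sets of restricted involutions. I would check that deleting a centrally symmetric set of positions and relabeling preserves reverse-complement invariance and the involution property, and that passing to a subword cannot create a forbidden $\bar{2}\bar{1}$. The more delicate direction is reinsertion: I must verify that gluing the outer shell back never introduces a $\bar{2}\bar{1}$. This is where the negative-fixed-point case $\hat{\pi}(1)=\bar{1}$ needs care, but it works out because the inserted $\bar{1}$ sits at the leftmost position with the smallest absolute value and the inserted $\overline{2n}$ sits at the rightmost position with the largest absolute value, so each pairs with every interior barred symbol as a harmless $\bar{1}\bar{2}$. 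Confirming that these insertions and deletions are mutually inverse, and that the case split on $\hat{\pi}(1)$ is exhaustive and disjoint, is the crux that pins the coefficients to exactly $3$ and $2(n-1)$.
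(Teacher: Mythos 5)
Your proof is correct, but it takes a genuinely different route from the paper: the paper never proves this proposition internally, instead citing it as Theorem 4.2 of Hardt--Troyka (this is made explicit in the proof of the corollary following Theorem \ref{thm:restricted}), with Dukes--Mansour as the background reference for signed pattern avoidance. Your argument is a self-contained shell-peeling proof, and its two ingredients are sound: the structural lemma (avoiding $\bar{2}\bar{1}$ in a signed involution is equivalent to having no barred $2$-cycles, since a barred $2$-cycle is literally a $\bar{2}\bar{1}$ occurrence while negative fixed points can only form $\bar{1}\bar{2}$ patterns) is the same observation the paper records as a remark in its clan-to-involution algorithm; and your case analysis on $\hat{\pi}(1)$ (three self-closing options $1$, $\bar{1}$, $2n$, versus $2(n-1)$ choices of a positive $2$-cycle $(1,m)$ forced by $\Rev\circ\Comp$-invariance to come with a companion $2$-cycle $(2n+1-m,2n)$) is, under the paper's bijection with skew-symmetric clans, exactly parallel to the paper's own recurrence argument for weighted Delannoy paths in Section \ref{C5:S4}, which peels off the position of $2n$ in the associated signed involution and likewise obtains the coefficients $3$ and $2(n-1)$. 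What your approach buys is independence from the literature: the paper's logical chain runs clan $\to$ restricted involution $\to$ cited recurrence, whereas your proof closes the loop entirely within the paper's own combinatorial framework; the cost is only the routine verifications you flag (central symmetry of the deleted positions, preservation of pattern avoidance under deletion and reinsertion), all of which check out, including the delicate case of reinserting $\bar{1}$ and $\overline{2n}$, which cannot create a $\bar{2}\bar{1}$ because they carry the extreme absolute values at the extreme positions.
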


Let us make the notion of the \emph{underlying involution} of a clan precise. This is effectively the associated signed $(p,q)$-involution mentioned at the end of Section \ref{C5:S0}, but without the signs on the fixed points.
 \begin{ndefn}
	For a skew-symmetric $(n, n)$-clan $\gamma = c_1 c_2 \dots c_{2n}$, the associated underlying involution $\pi\in\mc{S}_{2n}$ is defined as follows:
\begin{enumerate}
	\item $\pi(i) = i$ if $c_i$ is either a $+$ or a $-$ for any $i$;
	\item  $\pi(i) = j$ and $\pi(j)= i$ if
	$c_i = c_j \in \N$ is a matching pair of natural numbers for any $i, j$.
\end{enumerate}
\end{ndefn}

The following describes an algorithm for obtaining a restricted involution $\hat{\pi}$ from a skew-symmetric clan $\gamma$ with underlying involution $\pi$ by possibly applying negatives to the symbols of $\pi$ in its one-line notation.
\begin{itemize}
	\item[(i)] If $c_i = +$ for $i \leq n$ (so, by skew-symmetry, $c_{2n+1-i} = -$), keep $$\hat{\pi}(i) = i\;\;\; \text{and}\;\;\; \hat{\pi}(2n+1-i) = 2n+1-i.$$
	\item[(ii)] If $c_i = -$ for $i \leq n$ (so, by skew-symmetry, $c_{2n+1-i} = +$), take $$\hat{\pi}(i) = \ol{i} \;\;\; \text{and}\;\;\; \hat{\pi}(2n+1-i) = \overline{2n+1-i}.$$
	\item[(iii)] If $c_i=c_j \in \N$ is a matching pair of natural numbers, then keep $$\hat{\pi}(i) = j\;\;\; \text{and} \;\;\; \hat{\pi}(j)= i$$ for all $i, j$. 
\end{itemize}

\begin{nexap}
Consider the skew-symmetric $(2, 2)$-clan $\gamma = {-} 1 1 {+}$ which has underlying involution $\pi=1324$ in one-line notation. Since $c_1 = -$ and $c_4 = +$, by rule (ii) we will have $\hat{\pi}(1) =\ol{1}$ and $\hat{\pi}(4) =\ol{4}$. Since $c_2 =c_3 = 1$, rule (iii) gives us $\hat{\pi}(2) =3$ and $\hat{\pi}(3) =2$. It follows that the associated restricted involution is $\ol{1} 3 2 \ol{4}$. 
\end{nexap}

Without trouble, this algorithm can be reversed to give a map from restricted involutions to clans, under which each restricted involution $\hat{\pi}$ will correspond to a unique clan $\gamma$. For a given $\hat{\pi}= \hat{\pi}(1) \hat{\pi}(2) \dots \hat{\pi}(2n)$, we define the reverse algorithm as follows.
\begin{itemize}
	\item[(i)]  If $\hat{\pi}(i) = i$ for any $1\leq i \leq n$, then $\hat{\pi}(2n+1-i) = 2n+1-i$ because $\hat{\pi} = \Rev(\Comp(\hat{\pi}))$. Then the clan $\gamma$ has $$c_i = +\;\;\; \text{and} \;\;\; c_{2n+1-i} = -.$$
	\item[(ii)]  If $\hat{\pi}(i) = \ol{i}$ for any $1\leq i \leq n$, then $\hat{\pi}(2n+1-i) = \overline{2n+1-i}$ because \mbox{$\hat{\pi} = \Rev(\Comp(\hat{\pi}))$}. Then the clan $\gamma$ has $$c_i = -\;\;\; \text{and}\;\;\; c_{2n+1-i} = +.$$
	\item[(iii)]  If $\hat{\pi}(i) = j$ for any $1\leq i <j \leq 2n$, then $$c_i=c_j=a,$$ where $a\in \N$. 
\end{itemize}
\begin{nrk}
Notice that the case $\hat{\pi}(i)=\ol{j}$ cannot occur in a restricted involution because this would force $\hat{\pi}(j)=\ol{i}$, producing a $\bar{2}\bar{1}$ pattern.
\end{nrk}
This algorithm and its reverse are clearly injective, so the fact that each restricted involution in $\hat{\mc{I}}_n$ gives a skew-symmetric $(n,n)$-clan completes the bijection.

\begin{nthm} \label{thm:restricted} Restricted involutions on $2n$ letters and skew-symmetric $(n,n)$-clans are in bijection.
\end{nthm}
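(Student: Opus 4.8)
The plan is to show that the forward algorithm and its reverse are well-defined maps between the two sets and are mutually inverse; since the injectivity of both algorithms is already noted in the excerpt, the heart of the matter is checking that each map lands in the correct target set.

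First I would verify that the forward algorithm sends a skew-symmetric $(n,n)$-clan $\gamma$ to a genuine restricted involution $\hat{\pi}$. That $\hat{\pi}$ is a signed involution is immediate: rule (iii) produces unbarred $2$-cycles $(i,j)$, while rules (i) and (ii) produce unbarred and barred negative fixed points respectively, so $\hat{\pi}^2 = \mathrm{id}$ and every nontrivial cycle consists of two symbols of the same sign. Next I would confirm the reverse-complement identity $\hat{\pi} = \Rev(\Comp(\hat{\pi}))$, i.e. $\hat{\pi}(i) = \Comp(\hat{\pi})(2n+1-i)$ for all $i$, by checking it on each of the three rules. This is a short computation that hinges on the skew-symmetry $\gamma = -\mathrm{rev}(\gamma)$ of Definition \ref{ssClans}: the defining relation forces $c_i$ and $c_{2n+1-i}$ to carry opposite signs (handling rules (i) and (ii)), and forces each matching pair at positions $(i,j)$ to be mirrored by a matching pair at $(2n+1-j,\,2n+1-i)$ (handling rule (iii)).

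The one genuinely structural point is $\bar{2}\bar{1}$-avoidance. Here I would observe that, by construction, every barred entry of $\hat{\pi}$ is a negative fixed point: rule (iii) introduces only unbarred values, and rule (ii) assigns $\hat{\pi}(i) = \bar{i}$ and $\hat{\pi}(2n+1-i) = \overline{2n+1-i}$. Consequently $|\hat{\pi}(p)| = p$ at every barred position $p$, so for any two barred positions $p < q$ we have $|\hat{\pi}(p)| = p < q = |\hat{\pi}(q)|$, which precludes the descent in absolute value required by a $\bar{2}\bar{1}$ pattern. Hence $\hat{\pi} \in \hat{\mc{I}}_n$.

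For the reverse direction I would run the same analysis backwards. The Remark preceding the theorem is exactly what is needed: in a restricted involution no entry of the form $\hat{\pi}(i) = \bar{j}$ with $i \neq j$ can occur, so the only barred entries are again negative fixed points and the reverse algorithm's rules (i)--(iii) cover every case. I would then check that the resulting string $\gamma$ is a legitimate skew-symmetric $(n,n)$-clan: each $2$-cycle contributes a matching pair appearing exactly twice; rules (i) and (ii) each contribute one $+$ and one $-$ in mirror positions, so the signs are balanced and $\gamma$ is an $(n,n)$-clan; and the opposite-sign/mirror-pair structure built into the rules yields $\gamma = -\mathrm{rev}(\gamma)$, using once more the reverse-complement invariance of $\hat{\pi}$ to guarantee that the fixed-point and $2$-cycle data on the two halves are compatible. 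Finally, because the reverse rules invert the forward rules case by case, the two maps compose to the identity in both directions, establishing the bijection. I expect the sign-bookkeeping in the reverse-complement verification to be the most tedious part, while the conceptual crux is the observation that $\bar{2}\bar{1}$-avoidance is precisely equivalent to all barred symbols being negative fixed points.
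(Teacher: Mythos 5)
Your proposal is correct and takes essentially the same approach as the paper: the same forward and reverse case-by-case algorithms, with the pivotal observation (stated as a Remark in the paper) that $\bar{2}\bar{1}$-avoidance is exactly what forces every barred entry of a restricted involution to be a fixed point in absolute value. You in fact supply the well-definedness verifications in more detail than the paper, which dispatches them with ``clearly injective'' and leaves the target-membership checks implicit.
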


\begin{ncor}
	Taking $Z_0=1$ and $Z_1=3$, the number of skew-symmetric $(n,n)$-clans satisfies the recurrence relation
	\begin{equation} \label{skewrec}
	Z_n = 3Z_{n-1} + 2(n-1)Z_{n-2},
	\end{equation}
	and has exponential generating function 
	\begin{equation}
	\sum_{n=0}^\infty Z_n\frac{x^n}{n!} = e^{3x+x^2}.
	\end{equation}
\end{ncor}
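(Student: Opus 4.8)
The plan is to derive the recurrence first and then pass to the exponential generating function by recognizing the recurrence as a first-order linear ODE. The recurrence is essentially free: Theorem~\ref{thm:restricted} establishes a bijection between skew-symmetric $(n,n)$-clans and restricted involutions on $2n$ letters, so $Z_n = \hat{I}_n$ for every $n$, and in particular the base values match, $Z_0 = \hat{I}_0 = 1$ and $Z_1 = \hat{I}_1 = 3$. Proposition~\ref{resinvrec} gives the recurrence $\hat{I}_n = 3\hat{I}_{n-1} + 2(n-1)\hat{I}_{n-2}$ for $n \geq 2$, and transporting it across the bijection yields~\eqref{skewrec} immediately.

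For the generating function, I would set $F(x) := \sum_{n=0}^\infty Z_n \frac{x^n}{n!}$ and show that the recurrence~\eqref{skewrec}, together with the base cases, is equivalent to the differential equation $F'(x) = (3 + 2x)F(x)$ with $F(0) = 1$. To see this, I would compute the three relevant series term-by-term: $F'(x) = \sum_{n \geq 0} Z_{n+1}\frac{x^n}{n!}$; scaling gives $3F(x) = \sum_{n \geq 0} 3Z_n \frac{x^n}{n!}$; and shifting the index in $2xF(x)$, using $\frac{1}{(n-1)!} = \frac{n}{n!}$, gives $2xF(x) = \sum_{n \geq 1} 2nZ_{n-1}\frac{x^n}{n!}$. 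Comparing the coefficient of $\frac{x^n}{n!}$ on both sides of $F'(x) = 3F(x) + 2xF(x)$ produces $Z_{n+1} = 3Z_n + 2nZ_{n-1}$ for $n \geq 1$, which is precisely~\eqref{skewrec} after the substitution $n \mapsto n-1$, while the $n = 0$ comparison (where the $2xF(x)$ contribution vanishes) produces $Z_1 = 3Z_0$, consistent with the base values.

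It then remains to solve the ODE. Since $\int (3 + 2x)\, dx = 3x + x^2$, the general solution of $F' = (3+2x)F$ is $F(x) = C\, e^{3x + x^2}$, and the initial condition $F(0) = Z_0 = 1$ forces $C = 1$. Equivalently, one checks directly that $G(x) := e^{3x+x^2}$ satisfies $G'(x) = (3+2x)G(x)$ and $G(0) = 1$; because such a first-order linear equation with a prescribed value at $0$ determines all coefficients of a power-series solution uniquely via the very recurrence derived above, we conclude $F = G$, giving the claimed generating function.

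The main (and essentially only) obstacle is bookkeeping: one must keep the index shifts straight when converting the recurrence into the ODE, in particular handling the $n = 0$ term separately so that the base-case relation $Z_1 = 3Z_0$ emerges from the coefficient comparison rather than being imposed by hand. Everything else is routine manipulation of exponential generating functions.
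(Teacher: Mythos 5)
Your proposal is correct. For the recurrence it takes essentially the paper's own route: the paper's proof also combines the bijection of Theorem~\ref{thm:restricted} (so $Z_n = \hat{I}_n$) with the known recurrence for restricted involutions (Proposition~\ref{resinvrec}, which the paper attributes to Theorem 4.2 of Hardt--Troyka), and this immediately gives \eqref{skewrec}. Where you genuinely diverge is the generating function: the paper simply cites the OEIS entry A083886, whereas you derive it, showing that \eqref{skewrec} together with $Z_0=1$, $Z_1=3$ is equivalent to the first-order equation $F'(x) = (3+2x)F(x)$ with $F(0)=1$, whose unique formal power series solution is $e^{3x+x^2}$. Your bookkeeping checks out: comparing coefficients of $x^n/n!$ in $F' = 3F + 2xF$ yields $Z_{n+1} = 3Z_n + 2nZ_{n-1}$ for $n\geq 1$, which is \eqref{skewrec} after the shift $n \mapsto n-1$, and the $n=0$ comparison gives $Z_1 = 3Z_0$, which holds for the stated base values; uniqueness of the power-series solution then forces $F(x) = e^{3x+x^2}$. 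What your approach buys is a self-contained verification of the generating function that does not rest on an external database entry; what the paper's citation buys is brevity. Both are legitimate, but yours is the more complete treatment of the second claim.
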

\begin{proof} The recurrence relation appears for restricted involutions as Theorem 4.2 in \cite{hardtTroyka}. The exponential generating function appears in the associated OEIS entry \href{http://oeis.org/A083886}{A083886}.
\end{proof}

\newpage
\subsection{Partial Orders on Skew-Symmetric Clans}
In this subsection, we depict two important partial orders on $\mc{Z}_n$, namely the \emph{weak} and \emph{full} (or \emph{Bruhat}) closure orders. We refer the reader to~\cite{wyserThesis} for explanation of the notation and a full combinatorial description of the weak order, which is lengthy.  The full closure order can be obtained from the weak order by applying the recursive procedure described in \cite{mcgovern2009pattern}, which is implicit in the work of Richardson and Springer \cite{richardsonSpringer1990}.
		\begin{figure}[htbp!]\label{weakorder2}
		\begin{center}
			\begin{tikzpicture}[scale=.3]
			\node at (-21,0) (a1) {${+}{+} {-} {-}$};
			\node at (-7,0) (a2) {${+} {-} {+} {-}$};
			\node at (7,0) (a3) {${-} {+} {-} {+}$};
			\node at (21,0) (a4) {${-} {-} {+} {+}$};

			\node at (-10,10) (b1) {${+} 1 1 {-}$};
			\node at (0,10) (b2) {$1 1 2 2$};
			\node at (10,10) (b3) {${-} 1 1 {+}$};

			\node at (-10,20) (c1) {$1 {+} {-} 1	$};
			\node at  (0,20) (c2) {$1 2 1 2$};
			\node at  (10,20) (c3) {$1 {-} {+} 1$};
			
			\node at (0,30) (d) {$1 2 2 1$};
			
			\path (a1) edge [thick]    node[left] {\tiny$2$}  (b1)
			(a2) edge [thick]    node[left] {\tiny$2$}  (b1)
			(a2) edge [thick]    node[right] {\tiny$1$}  (b2)
			(a3) edge [thick]    node[left] {\tiny$1$}  (b2)
			(a3) edge [thick]    node[right] {\tiny$2$}  (b3)
			(a4) edge [thick]    node[right] {\tiny$2$}  (b3)

			(b1) edge [thick]    node[left] {\tiny$1$}  (c1)
			(b2) edge [thick]    node[left] {\tiny$2$}  (c2)
			(b3) edge [thick]    node[right] {\tiny$1$}  (c3)
			
			(b1) edge [thick, dashed, red]    node[right] {}  (c2)
			(b2) edge [thick, dashed, red]    node[right] {}  (c1)
			(b2) edge [thick, dashed, red]    node[right] {}  (c3)
			(b3) edge [thick, dashed, red]    node[right] {}  (c2)

			(c1) edge [thick]    node[left] {\tiny$2$}  (d)
			(c2) edge [thick]    node[left] {\tiny$1$}  (d)
			(c3) edge [thick]    node[right] {\tiny$2$}  (d);
			\end{tikzpicture}
			\caption{Closure orders on $\mc{Z}_2$ }\label{5:Clans}
		\end{center}
	\end{figure}
	
In the posets, the black labelled edges are those which come from the weak order, while dashed edges are only present in the Bruhat order. 
\begin{landscape}
	\begin{figure}[htp]\label{weakorder3}
		\begin{center}
			\begin{tikzpicture}[scale=.25]
			\node at (-50,0) (a1) {${+}{+}{+}{-}{-}{-}$};
			\node at (-38,0) (a2) {${+}{+}{-}{+}{-}{-}$};
			\node at (-26,0) (a3) {${+}{-}{+}{-}{+}{-}$};
			\node at (-14,0) (a4) {${+}{-}{-}{+}{+}{-}$};
			\node at (-2,0) (a5) {${-}{+}{+}{-}{-}{+}$};
			\node at (10,0) (a6) {${-}{+}{-}{+}{-}{+}$};
			\node at (22,0) (a7) {${-}{-}{+}{-}{+}{+}$};
			\node at (34,0) (a8) {${-}{-}{-}{+}{+}{+}$};

			\node at (-50,10) (b1) {${+}{+}11{-}{-}$};
			\node at (-38,10) (b2) {${+}1122{-}$};
			\node at (-26,10) (b3) {${+}{-}11{+}{-}$};
			\node at (-14,10) (b4) {$11{+}{-}22$};
			\node at (-2,10) (b5) {$11{-}{+}22$};
			\node at (10,10) (b6) {$ {-}{+}11{-}{+}$};
			\node at (22,10) (b7) {${-}1122{+}$};
			\node at (34,10) (b8) {${-}{-}11{+}{+}$};

			\node at (-50,20) (c1) {${+}1{+}{-}1{-}$};
			\node at (-39,20) (c2) {${+}1212{-}$};
			\node at (-28,20)(c3) {${+}1{-}{+}1{-}$}; 
			\node at (-17,20) (c4) {$1{+}12{-}2$};
			\node at (-8,20) (c5) {$112233$};
			\node at (2,20) (c6) {$1{-}12{+}2$};
			\node at (12,20) (c7) {${-}1{+}{-}1{+}$};
			\node at (23,20) (c8) {${-}1212{+}$};
			\node at (34,20) (c9) {${-}1{-}{+}1{+}$};

			\node at (-50,30) (d1) {${+}1221{-}$};
			\node at (-39,30) (d2) {$1{+}{+}{-}{-}1$};
			\node at (-28,30) (d3) {$1{+}21{-}2$};
			\node at (-17,30) (d4) {$1{+}{-}{+}{-}1$};
			\node at (-8,30) (d5) {$121323$};
			\node at (2,30) (d6) {$1{-}{+}{-}{+}1$};
			\node at (12,30) (d7) {$1{-}21{+}2$};
			\node at (23,30) (d8) {$1{-}{-}{+}{+}1$};
			\node at (34,30) (d9) {${-}1221{+}$};

			\node at (-32,40) (e1) {$1{+}22{-}1$};
			\node at (-22,40) (e2) {$12{+}{-}12$};
			\node at (-12,40) (e3) {$122331$};
			\node at (-4,40) (e4) {$123123$};
			\node at (6,40) (e5) {$12{-}{+}12$};
			\node at (16,40) (e6) {$1{-}22{+}1$};

			\node at (-22,50) (f1) {$12{+}{-}21$};
			\node at (-12,50) (f2) {$123312$};
			\node at (-4,50) (f3) {$123231 $};
			\node at (6,50) (f4) {$12{-}{+}21$};
			
			\node at (-8,60) (g1) {$123321$};
			
			\path 
			(a1) edge [thick] node[left] {\tiny$3$} (b1)		
			(a2) edge [thick] node[left] {\tiny$3$}  (b1)
			(a2) edge [thick] node[left] {\tiny$2$}  (b2)
			(a3) edge [thick] node[left] {\tiny$2$}  (b2)
			(a3) edge [thick] node[left] {\tiny$3$}  (b3)
			(a3) edge [thick] node[right,pos=.2] {\tiny$1$}  (b4)
			(a4) edge [thick] node[left,pos=.2] {\tiny$3$}  (b3)
			(a4) edge [thick] node[right,pos=.2] {\tiny$1$}  (b5)
			(a5) edge [thick] node[left,pos=.2] {\tiny$1$}  (b4)
			(a5) edge [thick] node[right, pos=.2] {\tiny$3$}  (b6)
			(a6) edge [thick] node[left, pos=.2] {\tiny$1$}  (b5)
			(a6) edge [thick] node[left] {\tiny$3$}  (b6)
			(a6) edge [thick] node[left] {\tiny$2$}  (b7)
			(a7) edge [thick] node[left] {\tiny$2$}  (b7)
			(a7) edge [thick] node[left] {\tiny$3$}  (b8)
			(a8) edge [thick] node[right] {\tiny$3$}  (b8)

			(b1) edge [thick] node[left] {\tiny$2$}  (c1)
			(b2) edge [thick] node[left] {\tiny$3$}  (c2)
			(b2) edge [thick] node[left,pos=.2] {\tiny$1$}  (c4)
			(b3) edge [thick] node[right,pos=.15] {\tiny$2$}  (c3)
			(b3) edge [thick] node[left,pos=.2] {\tiny$1$}  (c5)
			(b4) edge [thick] node[left,pos=.15] {\tiny$2$}  (c4)
			(b4) edge [thick] node[left] {\tiny$3$}  (c5)
			(b5) edge [thick] node[left] {\tiny$3$}  (c5)
			(b5) edge [thick] node[right, pos=.15] {\tiny$2$}  (c6)
			(b6) edge [thick] node[right, post=.2] {\tiny$1$}  (c5)
			(b6) edge [thick] node[left,pos=.2] {\tiny$2$}  (c7)
			(b7) edge [thick] node[left,pos=.2] {\tiny$1$}  (c6)
			(b7) edge [thick] node[left] {\tiny$3$}  (c8)
			(b8) edge [thick] node[left] {\tiny$2$}  (c9)
			
			(c1) edge [thick] node[left] {\tiny$3$}  (d1)
			(c1) edge [thick] node[right,pos=.2] {\tiny$1$}  (d2)
			(c2) edge [thick] node[left,pos=.2] {\tiny$2$}  (d1)	
			(c2) edge [thick] node[left] {\tiny$1$}  (d3)
			(c3) edge [thick] node[left] {\tiny$3$}  (d1)
			(c3) edge [thick] node[right,pos=.2] {\tiny$1$}  (d4)
			(c4) edge [thick] node[left,pos=.2] {\tiny$3$}  (d3)
			(c5) edge [thick] node[left] {\tiny$2$}  (d5)
			(c6) edge [thick] node[right,pos=.2] {\tiny$3$}  (d7)
			(c7) edge [thick] node[left,pos=.2] {\tiny$1$}  (d6)
			(c7) edge [thick] node[left] {\tiny$2$}   (d9)
			(c8) edge [thick] node[left,pos=.2] {\tiny$1$}   (d7)
			(c8) edge [thick] node[left,pos=.2] {\tiny$2$}   (d9)
			(c9) edge [thick] node[right] {\tiny$1$}   (d8)
			(c9) edge [thick] node[right] {\tiny$3$}   (d9)
			
			(d1) edge [thick] node[left] {\tiny$3$}   (e1)
			(d2) edge [thick] node[left] {\tiny$1$}   (e1)
			(d3) edge [thick] node[right,pos=.2] {\tiny$2$}   (e2)
			(d4) edge [thick] node[left,pos=.2] {\tiny$3$}   (e1)
			(d4) edge [thick] node[left] {\tiny$2$}   (e3)
			(d5) edge [thick] node[right] {\tiny$1$}   (e3)
			(d5) edge [thick] node[right,pos=.2] {\tiny$3$}   (e4)
			(d6) edge [thick] node[left,pos=.2] {\tiny$2$}   (e3)
			(d6) edge [thick] node[right,pos=.2] {\tiny$3$}   (e6)
			(d7) edge [thick] node[left,pos=.2] {\tiny$2$}   (e5)
			(d8) edge [thick] node[right] {\tiny$3$}   (e6)
			(d9) edge [thick] node[right] {\tiny$1$}   (e6)

			(e1) edge [thick] node[left] {\tiny$2$}   (f1)
			(e2) edge [thick] node[left] {\tiny$1$}   (f1)
			(e2) edge [thick] node[left] {\tiny$3$}   (f2)
			(e3) edge [thick] node[right,pos=.2] {\tiny$3$}   (f3)
			(e4) edge [thick] node[left,pos=.2] {\tiny$2$}   (f2)
			(e4) edge [thick] node[right,pos=.2] {\tiny$1$}   (f3)
			(e5) edge [thick] node[left,pos=.2] {\tiny$3$}   (f2)
			(e5) edge [thick] node[right] {\tiny$1$}   (f4) 
			(e6) edge [thick] node[right] {\tiny$2$}   (f4)
			
			(f1) edge [thick] node[left] {\tiny$3$}  (g1)
			(f2) edge [thick] node[left] {\tiny$1$}  (g1)
			(f3) edge [thick] node[right] {\tiny$2$}  (g1)
			(f4) edge [thick] node[right] {\tiny$3$}  (g1)

			(b1) edge [thick, dashed, red]    node[right] {}  (c2)
			(b2) edge [thick, dashed, red]    node[right] {}  (c1)
			(b2) edge [thick, dashed, red]    node[right] {}  (c3)
			(b3) edge [thick, dashed, red]    node[right] {}  (c2)
			(b6) edge [thick, dashed, red]    node[right] {}  (c8)
			(b7) edge [thick, dashed, red]    node[right] {}  (c7)
			(b7) edge [thick, dashed, red]    node[right] {}  (c9)
			(b8) edge [thick, dashed, red]    node[right] {}  (c8)
			(c4) edge [thick, dashed, red]    node[right] {}  (d2)
			(c4) edge [thick, dashed, red]    node[right] {}  (d5)
			(c5) edge [thick, dashed, red]    node[right] {}  (d4)
			(c5) edge [thick, dashed, red]    node[right] {}  (d6)
			(c6) edge [thick, dashed, red]    node[right] {}  (d5)
			(c6) edge [thick, dashed, red]    node[right] {}  (d8)
			(d1) edge [thick, dashed,red]    node[right] {}  (e2)
			(d1) edge [thick, dashed, red]    node[right] {}  (e4)
			(d3) edge [thick, dashed, red]    node[right] {}  (e1)
			(d3) edge [thick, dashed, red]    node[right] {}  (e4)
			(d5) edge [thick, dashed, red]    node[right] {}  (e2)
			(d5) edge [thick, dashed, red]    node[right] {}  (e5)
			(d7) edge [thick, dashed, red]    node[right] {}  (e4)
			(d7) edge [thick, dashed, red]    node[right] {}  (e6)
			(d9) edge [thick, dashed, red]    node[right] {}  (e4)
			(d9) edge [thick, dashed, red]    node[right] {}  (e5)
			(e1) edge [thick, dashed, red]    node[right] {}  (f3)
			(e3) edge [thick, dashed, red]    node[right] {}  (f1)
			(e3) edge [thick, dashed, red]    node[right] {}  (f4)
			(e6) edge [thick, dashed, red]    node[right] {}  (f3);
			
			\end{tikzpicture}
			
			\caption{Closure orders on $\mc{Z}_3$ }
		\end{center}
	\end{figure}
\end{landscape}
\section{A combinatorial interpretation} \label{C5:S4}

In this section, we describe a combinatorial set of objects whose cardinality is given by $Z_n$. Recall that an \emph{$(n,n)$ Delannoy path} is an integer lattice path from $(0,0)$ to $(n,n)$ in the plane $\R^2$ consisting only of single north, east, diagonally northeast steps. Alternatively, one can consider strings from the alphabet $\{N, E, D \}$ such that the number of $N$'s plus the number of $D$'s is equal to the sum of the $E$'s and $D$'s (which is equal to $n$). We will denote the collection of such paths by $\mc{D}(n, n)$.

In \cite{canGenesis}, it was observed that the recurrence relation for the set of all $(p,q)$-clans bears strong resemblance to the recurrence relation for the set of Delannoy paths from $(0,0)$ to $(p,q)$. On the basis of this observation, a bijection between $(p,q)$-clans and Delannoy paths with certain weighted steps was established. Here, we provide a similar construction for skew-symmetric $(n,n)$-clans, which is modified to satisfy the appropriate recurrence (\ref{skewrec}) and so that the path associated to a closed orbit is the same as the path associated to the corresponding Schubert cell (see Section \ref{sasc}). Although the skew-symmetric $(n,n)$-clans form a subset of all $(n,n)$-clans, it is important to note that the paths produced here are not a subset of those constructed in \cite{canGenesis}. 

We produce an explicit bijection between the set of skew-symmetric \mbox{$(n, n)$-clans} $\mc{Z}_n$ and the set of Delannoy paths with certain labels which are defined as follows.

\begin{ndefn}
	By a \emph{labeled step} we mean a pair $(L,l)$,
	where $L\in \{N,E,D\}$ and $l$ is a positive integer
	such that $l=1$ if $L=N$ or $L=E$. 
	A \emph{weighted $(n,n)$ Delannoy path} is a word of the form 
	$W:=W_1\dots W_r$, where $W_i$'s $(i=1,\dots, r)$ 
	are labeled steps $W_i=(L_i,l_i)$ such that 
	\begin{itemize}
			\item $L_1\dots L_r$ is a Delannoy path from $\mc{D}(n,n)$.
			\item letting $W_{i_1}\dots W_{i_t}$ be the subword consisting of all weighted steps which are not $(D, 1)$, then $t$ is even. Further if $W_i=(D,1)$, then $i>i_{\frac{t}{2}}$.
			\item if $L_{i_j}=N$, then $L_{i_{t+1-j}}=E$ and vice versa for $1\leq j \leq t$.
			\item letting $$ d_i= \# \{ k>i \mid W_k=(D,1)\},$$
			    and  $$ m_i=\# \{k < i \mid l_k\neq 1\},$$ if $l_{i_j}\neq 1$ (so $L_{i_j}=D$), then $$W_{i_{t+1-j}}=(D,2n+3-2(i_j+m_{i_j}+d_{i_{t+1-j}})-(l_{i_j})),$$ for $1\leq j \leq \frac{t}{2}$.
	\end{itemize}
	The set of all weighted $(n,n)$ Delannoy 
	paths is denoted by $\mc{L}^w(n,n)$. The last condition, together with the fact that weights must be positive, implies that $1\leq l_i\leq 2n-1$ for any $i$.
\end{ndefn}

\begin{nthm}
	There is a bijection between the set of weighted $(n, n)$ Delannoy 
	paths and the set of skew-symmetric $(n,n)$-clans. 
	In particular, we have 
	$$
	Z_n = \sum_{W\in \mc{L}^{\omega}(n,n)} 1.
	$$
\end{nthm}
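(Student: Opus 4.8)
The plan is to construct an explicit bijection $\Phi\colon \mc{Z}_n \to \mc{L}^w(n,n)$ built on the decomposition of a skew-symmetric clan into families and stand-alone pairs from the proof of Theorem~\ref{5:kcycles}. First I would fix the dictionary between clan substructures and path features: each skew-symmetric pair of $\pm$ signs $\{c_p, c_{2n+1-p}\}$ with $p\le n$ contributes one $N$ step and one $E$ step (with the sign at the first-half position deciding which is the opener); each family (a pair of pairs of type $\Pi_{1,1}$ or $\Pi_{1,2}$) contributes two diagonal steps of weight $>1$; and each stand-alone pair of type $\Pi_2$ contributes a single $(D,1)$ step. Writing $k=2b+a$ for the number of matching pairs, with $b$ families and $a$ stand-alone pairs, this produces $n-k$ $N$'s, $n-k$ $E$'s, and $2b+a=k$ diagonal steps, so the underlying word lies in $\mc{D}(n,n)$; moreover the non-$(D,1)$ steps number $t=2(n-k)+2b$, which is even, and there are exactly $a$ steps of the form $(D,1)$. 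These counts are precisely what the first two bullets of the definition of $\mc{L}^w(n,n)$ demand.

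Next I would pin down the order in which the steps are emitted and the weights they carry. The emission order must place the ``opening'' half of the subword --- one step for each sign pair and one for each family --- before all of the $(D,1)$ steps and before the ``closing'' halves, so that the palindromic pairing $i_j \leftrightarrow i_{t+1-j}$ of the non-$(D,1)$ steps reproduces the skew-symmetry of $\gamma$: a sign pair becomes a mirrored $N/E$ pair (matching the requirement that $L_{i_j}=N$ forces $L_{i_{t+1-j}}=E$), a family becomes a mirrored $D/D$ pair, and each self-mirror stand-alone pair becomes a lone $(D,1)$ step occurring after the midpoint $i_{t/2}$. The weight of the opening diagonal step of a family would record the position of its matching entry, after which the closing diagonal weight is forced by the mirror relation; the auxiliary counts $m_i$ and $d_i$ are present precisely to discount positions already consumed by earlier weighted steps and by later $(D,1)$ steps, so that the closing weight equals $2n+3 - 2(i_j + m_{i_j} + d_{i_{t+1-j}}) - l_{i_j}$. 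I would then establish well-definedness by checking all four conditions, the delicate points being that this order forces every $(D,1)$ step beyond $i_{t/2}$ and that the weight formula always returns an integer in the range $[1,2n-1]$ produced by the accounting behind Lemma~\ref{pmlemma}.

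For bijectivity I would give the inverse: reading a weighted path, the first $t/2$ subword steps reconstruct the first-half signs and the openers of families, each family weight locates its matching entry, the $(D,1)$ steps are reinserted as stand-alone pairs at mirror positions, and the remainder of the clan is determined by skew-symmetry (the analogue of the remark ruling out $\hat{\pi}(i)=\overline{j}$ excludes inconsistent decodings here). The main obstacle I expect is exactly this weight bookkeeping together with the choice of emission order: one must show that the offsets $m_i,d_i$ make the weights simultaneously positive, bounded by $2n-1$, and uniquely invertible, and that no naive left-to-right reading of $\gamma$ suffices, since a stand-alone pair read at either its first or its second occurrence can land on the wrong side of $i_{t/2}$. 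As an independent check I would confirm that $\lvert \mc{L}^w(n,n)\rvert$ obeys the recurrence~\eqref{skewrec}, by conditioning on the outermost structure (two sign orientations and one stand-alone option give the factor $3$, while insertion of a family gives the factor $2(n-1)$); since $Z_n$ satisfies the same recurrence with the same initial data, this verifies that the construction is counting-correct.
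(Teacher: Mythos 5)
Your proposal is correct and matches the paper's proof in substance: the paper uses exactly the outside-in decomposition you describe (sign pairs giving mirrored $N$/$E$ steps, families giving mirrored weighted $D$ steps with weights summing compatibly, stand-alone pairs giving $(D,1)$ steps), derives the recurrence $e_n = 3e_{n-1} + 2(n-1)e_{n-2}$ with $e_0 = 1$, $e_1 = 3$ --- which is precisely your final ``independent check'' --- and obtains the explicit bijection by iterating that peeling construction on the largest remaining symbol of the underlying involution. The only difference is one of emphasis: you foreground the global dictionary, the verification of the four defining conditions of a weighted path, and the inverse map, while the paper leads with the counting recurrence and leaves those verifications implicit in the recursive construction.
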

\begin{proof}
	Let $e_{n}$ denote the cardinality of $\mc{L}^{\omega}(n, n)$. 
	We will prove that the sequence of $e_{n}$'s 
	obeys the same recurrence as the $Z_n$'s, and it satisfies the 
	same initial conditions. 
	Let $\gamma = c_1\dots c_{2n}$ be an arbitrary skew-symmetric $(n, n)$-clan with associated signed $(n,n)$-involution
	$$
	\pi =(i_1, j_1) \dots (i_{k},j_{k}) (d_1^{s_1}) \dots (d_{2n-2k}^{s_{2n-2k}}),\
	\text{ where } s_1,\dots, s_{2n-2k}\in \{+,-\}.
	$$ 
	
	First, we look at the position of $2n$, which gives us four cases. 
	If $2n$ appears as a fixed point with a $+$ sign, $1$ must appear as a fixed point with a $-$ sign by the skew-symmetry condition. If this is the case, then we draw first an $E$-step between $(n-1, n)$ and $(n, n)$ and then an $N$-step between $(0, 0)$ and $(0, 1)$. We label both of these steps by 1 to turn them into labeled steps. Next, we remove the fixed points $1$ and $2n$ from $\pi$ and shift the remaining symbols down by 1 to obtain a signed $(n-1, n-1)$-involution corresponding to a skew-symmetric clan. There are $e_{n-1}$ possible ways of completing the drawn steps between $(0,1)$ and $(n-1,n)$ to an $(n, n)$ Delannoy path.
	
 In a similar manner, in case $2n$ appears as a fixed point with a $-$ sign, $1$ must appear as a fixed point with a $+$ sign. Then we draw an $N$-step between $(n, n-1)$ and $(n, n)$ and an $E$-step between $(0, 0)$ and $(1, 0)$, each with weight 1. Then there are again $e_{n-1}$ possible ways of completing these steps between $(1,0)$ and $(n,n-1)$ to a weighted $(n, n)$ Delannoy path.
	
	Next, consider the case where $2n$ appears in a transposition $(i, 2n)$ coming from a family in the clan $\gamma$. Then there is a companion 
	2-cycle, which is necessarily of the form $(1,j )$ for $j=2n+1-i$. So, we draw a $D$-step between $(n-1, n-1)$ and $(n, n)$ and
	label this step by $i$, and draw another $D$-step between $(0, 0)$ and $(1, 1)$ and
	label this step by $j$.
	Next we remove the two cycle $(i,2n)$ as well
	as its companion $(1,j)$ from $\pi$.
	To get rid of the gaps in the remaining numbers created by the removal of two 2-cycles, 
	we re-normalize the remaining entries by appropriately 
	subtracting numbers so that in the resulting object, 
	which we denote by $\pi^{(1)}$, every number from 
	$\{1,\dots, 2n-4\}$ appears exactly once. It is easy to 
	see that we then have a signed $(n-2, n-2)$-involution 
	which corresponds to a skew-symmetric $(n-2, n-2)$-clan. Now, it can occur that the label of either drawn diagonal step can be any of the	$2(n-1)$ numbers from $\{2,\dots, 2n-1\}$, but the choice of one specifies the other as they must add to $2n+1$. 
	Finally, let us note that there are $e_{n-2}$ 
	possible ways to complete these labeled diagonal steps between $(1,1)$ and $(n-1,n-1)$ to a weighted $(n,n)$ Delannoy path. 
	
	As the final case, consider when $2n$ appears in transposition with 1. Then, we draw a $D$-step between $(n, n)$ and $(n-1, n-1)$ and we 
	label this step by $1$. 
	Then we remove the 2-cycle $(1,2n)$ from $\pi$, and to get rid of the gap created, 
	we re-normalize the remaining entries by subtracting 1 from each so that in the resulting object $\pi^{(1)}$, every number from 
	$\{1,\dots, 2n-2\}$ appears exactly once.  
	Finally, let us note that there are $e_{n-1}$ 
	possible weighted paths from $(0,0)$ to $(n-1,n-1)$ to complete this labeled diagaonal step to a weighted $(n,n)$ Delannoy path. 
	
	Combining our observations we see that a weighted $(n, n)$ Delannoy path can be obtained (by appending the initial or final weighted steps described above) from an $(n-1,n-1)$-clan in 3 ways, or from an $(n-2,n-2)$-clan in $2n-2$ different ways. Thus, weighted $(n, n)$ Delannoy paths satisfy the recurrence
	\begin{align}\label{5:total}
	e_n=3e_{n-1} + 2(n-1) e_{n-2}.
	\end{align}
We can take $e_0=1$, and it is easy to check $e_1=3$, where the three paths are $(N,1)(E,1)$, $(E,1)(N,1)$ and $(D,1)$. An explicit bijection is achieved by iterating the steps above and removing entries from $\pi$ at each stage to produce weighted steps. This finishes our proof. 
\end{proof}

Let us illustrate our construction by an example.

\begin{nexap}
	Let $\gamma$ denote the skew-symmetric $(5,5)$-clan 
	$$
	\gamma = {+}{-}1 2 3 3 1 2{+}{-},
	$$
	and let $\pi$ denote the corresponding signed involution 
	$$
	\pi = (3,7)(4,8)(5,6)1^+ 2^- 9^+ 10^-.
	$$ 
	The steps of our constructions 
	are shown in Figure~\ref{5:last pic}.
	\begin{figure}[htb!]
		\begin{center}
			\begin{tikzpicture}[scale=.35]
			\begin{scope}[xshift = -5cm]
			\node at (0,2.5) {$\pi =(3,7)(4,8)(5,6)1^+ 2^-  9^+ 10^-$};
			\node at (0,.5) {$\pi^{(1)}= (2, 6) (3, 7) (4, 5)1^-  8^+$};
			\draw[ultra thick,red, ->] (0,2) to (0,1);
			\end{scope}
			\begin{scope}[xshift = 4.5cm]
			\draw (0,0) grid (5,5);
			\draw[ultra thick,red] (5,5) to (5,4);
			\draw[ultra thick,red] (0,0) to (1,0);
			\node[blue] at (5.5, 4.5) {$1$};
			\node[blue] at (0.5, -0.5) {$1$};
			\end{scope}
			\begin{scope}[xshift = -5cm,yshift=-7cm]
			\node at (0,2.5)  {$\pi^{(1)}= (2, 6) (3, 7) (4, 5)1^-  8^+$};
			\node at (0,.5) {$\pi^{(2)}= (1, 5) (2, 6) (3, 4)  $};
			\draw[ultra thick,red, ->] (0,2) to (0,1);
			\end{scope}
			\begin{scope}[xshift = 4.5cm,yshift=-7cm]
			\draw (0,0) grid (5,5);
			\draw[ultra thick,red] (5,5) to (5,4);
			\draw[ultra thick,red] (5,4) to (4,4);
			\draw[ultra thick,red] (0,0) to (1,0);
			\draw[ultra thick,red] (1,0) to (1,1);
			\node[blue] at (5.5, 4.5) {$1$};
			\node[blue] at (4.5, 3.5) {$1$};
			\node[blue] at (0.5, -0.5) {$1$};
			\node[blue] at (1.5, 0.5) {$1$};
			\end{scope}
			\begin{scope}[xshift = -5cm,yshift=-14cm]
			\node at (0,2.5) {$\pi^{(2)}= (1, 5) (2, 6) (3, 4)  $};
			\node at (0,.5) {$\pi^{(3)}= (1, 2)$};
			\draw[ultra thick,red, ->] (0,2) to (0,1);
			\end{scope}
			\begin{scope}[xshift = 4.5cm,yshift=-14cm]
			\draw (0,0) grid (5,5);
			\draw[ultra thick,red] (5,5) to (5,4);
			\draw[ultra thick,red] (5,4) to (4,4);
			\draw[ultra thick,red] (0,0) to (1,0);
			\draw[ultra thick,red] (1,0) to (1,1);
			\draw[ultra thick,red] (1,1) to (2,2);
			\draw[ultra thick,red] (3,3) to (4,4);
			\node[blue] at (5.5, 4.5) {$1$};
			\node[blue] at (4.5, 3.5) {$1$};
			\node[blue] at (0.5, -0.5) {$1$};
			\node[blue] at (1.5, 0.5) {$1$};
			\node[blue] at (3.5, 3.5) {$2$};
			\node[blue] at (1.5, 1.5) {$5$};
			\end{scope}

			\begin{scope}[xshift = -5cm,yshift=-21cm]
			\node at (0,2.5) {$\pi^{(3)}= (1, 2)$};
			\node at (0,.5) {$\pi^{(4)}= \cdot$};
			\draw[ultra thick,red, ->] (0,2) to (0,1);
			\end{scope}
			\begin{scope}[xshift = 4.5cm,yshift=-21cm]
			\draw (0,0) grid (5,5);
			\draw[ultra thick,red] (5,5) to (5,4);
			\draw[ultra thick,red] (5,4) to (4,4);
			\draw[ultra thick,red] (0,0) to (1,0);
			\draw[ultra thick,red] (1,0) to (1,1);
			\draw[ultra thick,red] (1,1) to (2,2);
			\draw[ultra thick,red] (3,3) to (4,4);
			\draw[ultra thick,red] (2,2) to (3,3);
			\node[blue] at (5.5, 4.5) {$1$};
			\node[blue] at (4.5, 3.5) {$1$};
			\node[blue] at (0.5, -0.5) {$1$};
			\node[blue] at (1.5, 0.5) {$1$};
			\node[blue] at (3.5, 3.5) {$2$};
			\node[blue] at (1.5, 1.5) {$5$};
			\node[blue] at (2.5, 2.5) {$1$};
			\end{scope}
			\end{tikzpicture}
		\end{center}
		\caption{Algorithmic construction of the bijection onto weighted Delannoy paths.}
		\label{5:last pic}
	\end{figure}
\end{nexap}

One could describe the weak and full closure orders on $B \backslash Sp_{2n}/GL_n$ in terms of weighted lattice paths with appropriate statistics. However, this rephrasing of the order on clans does not seem to be as illuminating as it is for the order on Schubert cells in the (Lagrangian) Grassmannian.  Lattice paths associated to Schubert cells determine partition shapes, whence their closure order coincides with the order relations from Young's lattice (given by containment of partition shapes).
\begin{figure}[htp]
	\begin{center}
		\begin{tikzpicture}[scale=.25]
		\node at (-15,0) (a1) {$\begin{tikzpicture}[scale=.5]
			\draw (0,0) grid (2,2);
			\draw[ultra thick,red] (0,0) to (2,0);
			\draw[ultra thick,red] (2,0) to (2,2);
			\end{tikzpicture}	$};
		\node at (-5,0) (a2) {$\begin{tikzpicture}[scale=.5]
			\draw (0,0) grid (2,2);
			\draw[ultra thick,red] (0,0) to (1,0);
			\draw[ultra thick,red] (1,0) to (1,1);
			\draw[ultra thick,red] (1,1) to (2,1);
			\draw[ultra thick,red] (2,1) to (2,2);
			\end{tikzpicture}	$};
		\node at (5,0) (a3) {$\begin{tikzpicture}[scale=.5]
			\draw (0,0) grid (2,2);
			\draw[ultra thick,red] (0,0) to (0,1);
			\draw[ultra thick,red] (0,1) to (1,1);
			\draw[ultra thick,red] (1,1) to (1,2);
			\draw[ultra thick,red] (1,2) to (2,2);
			\end{tikzpicture}	$};
		\node at (15,0) (a4) {$\begin{tikzpicture}[scale=.5]
			\draw (0,0) grid (2,2);
			\draw[ultra thick,red] (0,0) to (0,2);
			\draw[ultra thick,red] (0,2) to (2,2);
			\end{tikzpicture}	$};

		\node at (-10,10) (b1) {$\begin{tikzpicture}[scale=.5]
			\draw (0,0) grid (2,2);
			\draw[ultra thick,red] (0,0) to (1,0);
			\draw[ultra thick,red] (1,0) to (2,1);
			\draw[ultra thick,red] (2,1) to (2,2);
			\node[blue] at (1.5,0.5) {$1$};
			\end{tikzpicture}	$};
		\node at (0,10) (b2) {$\begin{tikzpicture}[scale=.5]
			\draw (0,0) grid (2,2);
			\draw[ultra thick,red] (0,0) to (2,2);
			\node[blue] at (1.5,1.5) {$3$};
			\node[blue] at (0.5,0.5) {$2$};
			\end{tikzpicture}	$};
		\node at (10,10) (b3) {$\begin{tikzpicture}[scale=.5]
			\draw (0,0) grid (2,2);
			\draw[ultra thick,red] (0,0) to (0,1);
			\draw[ultra thick,red] (0,1) to (1,2);
			\draw[ultra thick,red] (1,2) to (2,2);
			\node[blue] at (.5,1.5) {$1$};
			\end{tikzpicture}	$};

		\node at (-10,20) (c1) {$\begin{tikzpicture}[scale=.5]
			\draw (0,0) grid (2,2);
			\draw[ultra thick,red] (0,0) to (1,0);
			\draw[ultra thick,red] (1,0) to (1,1);
			\draw[ultra thick,red] (1,1) to (2,2);
			\node[blue] at (1.5,1.5) {$1$};
			\end{tikzpicture}	$};
		\node at  (0,20) (c2) {$\begin{tikzpicture}[scale=.5]
			\draw (0,0) grid (2,2);
			\draw[ultra thick,red] (0,0) to (2,2);
			\node[blue] at (1.5,1.5) {$2$};
			\node[blue] at (0.5,0.5) {$3$};
			\end{tikzpicture}	$};
		\node at  (10,20) (c3) {$\begin{tikzpicture}[scale=.5]
			\draw (0,0) grid (2,2);
			\draw[ultra thick,red] (0,0) to (0,1);
			\draw[ultra thick,red] (0,1) to (1,1);
			\draw[ultra thick,red] (1,1) to (2,2);
			\node[blue] at (1.5,1.5) {$1$};
			\end{tikzpicture}	$};
		
		\node at (0,30) (d) {$\begin{tikzpicture}[scale=.5]
			\draw (0,0) grid (2,2);
			\draw[ultra thick,red] (0,0) to (2,2);
			\node[blue] at (.5,.5) {$1$};
			\node[blue] at (1.5,1.5) {$1$};
			\end{tikzpicture}	$};
		
		\draw[-,  thick] (a1) to (b1);
		\draw[-,  thick] (a2) to (b1);
		\draw[-,  thick] (a2) to (b2);
		\draw[-,  thick] (a3) to (b2);
		\draw[-,  thick] (a3) to (b3);
		\draw[-,  thick] (a4) to (b3);
		
		\draw[-, thick] (b1) to (c1);
		\draw[-, thick] (b2) to (c2);
		\draw[-, thick] (b3) to (c3);

		\draw[-, thick] (c1) to (d);
		\draw[-, thick] (c2) to (d);
		\draw[-, thick] (c3) to (d);
		\end{tikzpicture}
		
		\caption{Weak order on $\mc{L}^{\omega}(2, 2)$ }\label{5:LPaths}
	\end{center}
\end{figure}

\section{Sects} \label{sec:sects}

In this section, we will verify that the framework of \cite{bcSects}, described in the introduction, applies to the $CI$ case. 

\subsection{Parabolic Subgroups and Levi Factors}

In order to present sects for skew-symmetric $(n, n)$-clans, we must visit the theory of parabolic subgroups of symplectic groups. We refer to \cite{malleTesterman} for background theory.

Given a vector space $V$ with bilinear form $\omega$, recall that an \emph{isotropic subspace} $W$ is one such that $\omega(\mb{u},\mb{v}) = 0$ for all vectors $\bf{u}, \bf{v} \in W$. If we also use $\omega$ to stand for the matrix which represents this bilinear form in some chosen basis, this condition becomes  $\bf{u}^t \omega \bf{v} =0$. A \emph{polarization} of $V$ is a direct sum decomposition of $V$ into subspaces which are each isotropic (with respect to $\omega$), that is
$V = V_- \oplus V_+$.

Given a vector space $V$ with bilinear form $\omega$, we define an \emph{isotropic flag} as a sequence of vector spaces
\[ \{\mathbf{0} \} \subs V_1 \subs \dots V_r \subs V \]
such that $V_i$ is an isotropic subspace of $V$ for all $1\leq i \leq r$. Taking $V=\C^{2n}$, we have a skew-symmetric bilinear form given by the same $\Omega$ which defines the symplectic group. From \cite{malleTesterman}, Proposition 12.13, the parabolic subgroups of $G=Sp_{2n}$ are precisely the stabilizers of flags which are isotropic with respect to $\Omega$.

Let $E_n$ be the subspace of $\C^{2n}$ generated by standard basis vectors $\mb{e}_i$, $1\leq i \leq n$. It is easy to check that this is an isotropic subspace of $\C^{2n}$ with respect to $\Omega$, and in fact this subspace is maximally isotropic, or \emph{Lagrangian}. 
The stabilizer of $E_n$ (which is the stabilizer of the the flag $\{\mb{0}\} \subs E_n \subs \C^{2n}$) is the parabolic subgroup $P$ consisting of matrices with $n \x n$ block form
\begin{equation}\label{eq:PL}
P= \lt\{p\in G\; \middle| \; p=  \begin{pmatrix}
* & * \\
0 & * 
\end{pmatrix}
\rt\}
,\;
\text{which has Levi factor } \;
L =\lt\{ \begin{pmatrix}
A & 0 \\
0 & J_n(A^{-1})^t J_n 
\end{pmatrix} \;\middle|\; A \in GL_n \rt\}. 
\end{equation}
See \cite{malleTesterman} p. 144 or \cite{garrettBuildings} \S 8.1 for related discussion. Thus, we see that the Levi subgroup of this parabolic subgroup coincides with the symmetric subgroup presented in Section \ref{C5:S0}.

There is also a polarization of $\C^{2n}$ as 
\[V=E_n\bigoplus W,\]
where $W$ is the subspace spanned by $\{\mb{e}_{n+1},\dots,\mb{e}_{2n}\}$. Notice that $L$ is exactly the stabilizer subgroup of this polarization, as it preserves each component. Since $G=Sp_{2n}$ acts transitively on polarizations, we can identify $G/L=Sp_{2n}/GL_n$ with the space of polarizations of the symplectic vector space $(\C^{2n},\Omega)$. 

The upshot of this is that we have a $G$-equivariant projection map 
\[\pi: G/L \lra G/P\]
which we can analyze. Let $B$ be the Borel subgroup of upper triangular matrices in $G$ (\cite{malleTesterman}, p. 39). The $B$-orbits in $G/P$ are collections of Lagrangian subspaces which form Schubert cells, while $B$-orbits in $G/L$ are collections of polarizations indexed by skew symmetric $(n,n)$-clans. The equivariance of $\pi$ allows us to ask precisely which clans constitute the pre-image of a particular Schubert cell. We call such a collection of clans the \emph{sect} associated to the Schubert cell.

In \cite{yamamotoOrbits}, clans parametrize $L$-orbits in the isotropic flag variety $G/B$ by encoding the information of how flags in a given orbit intersect with each component of the reference polarization $E_n\oplus W$.\footnote{See Definition  2.1.7 and Proposition 2.2.6 of the cited work for details.} Recall that a \emph{full} isotropic flag $F_\bullet$ in $\C^{2n}$ is a sequence of vector subspaces $\{V_i\}_{i=0}^n$ such that
\begin{equation}\label{eq:isoflag}
\{\mb{0}\}= V_0 \subs V_1 \subs V_2 \subs \dots \subs V_n 
\end{equation} 
and $\dim V_i=i$ for all $i$ and $V_n$ is a maximal isotropic subspace. We find it convenient to write 
\[ F_\bullet = \la \mb{v}_1, \mb{v}_2, \dots, \mb{v}_n  \ra \] to indicate that $F_\bullet$ is the flag with $V_i= \spn \{ \mb{v}_1, \dots, \mb{v}_i \}$ for all $1\leq i \leq n $. Any full isotropic flag is canonically extended to a full flag in $\C^{2n}$
\[\{\mb{0}\} \subs V_1 \subs \dots \subs V_{2n-1} \subs V_{2n}=\C^{2n} \]
by assigning 
\[V_{2n-i} = V_i^\perp := \{ \mb{v}\in \C^{2n} \mid \Omega(\mb{v},\mb{w})= 0,\  \forall \mb{w}\in V_i \}, \]
so we may abuse notation slightly by using $F_\bullet$ to refer to either presentation. For example, the standard isotropic full flag is extended as
\[ E_\bullet= \la \mb{e}_1, \dots, \mb{e}_n, \mb{e}_{n+1},\dots, \mb{e}_{2n} \ra.\]


If $g\in G$ is a matrix whose $i$\ts{th} column is a vector $\mb{v}_i$, then one can obtain a full isotropic flag from $g$ by taking $F_\bullet=\la \mb{v}_1, \dots, \mb{v}_{2n}\ra$. For example, the identity matrix $I_{2n}$ gives the standard isotropic flag $E_\bullet$. All elements of the same (right) $B$-orbit give the same flag, so one can identify the points of $G/B$ with full isotropic flags with respect to $\Omega$ in $\C^{2n}$.

We must present a few definitions before describing the process of obtaining orbit-representative flags; see also \cite{bcSects}.
\begin{ndefn}
	Given an $(n,n)$-clan $\gamma= c_1\cdots c_{2n}$, one obtains the \emph{default signed clan} associated to $\gamma$ by assigning to $c_i$ a signature of $-$ and to $c_j$ a signature of $+$ whenever $c_i=c_j \in \N$ and $i< j$. We denote this default signed clan as $\tl{\gamma} =\tl{c}_1 \cdots  \tl{c}_{2n}$. 
\end{ndefn}

\begin{nrk} Note that the definition above differs from Definition 3.2.8 of ``skew-symmetric signed clan'' given in \cite{yamamotoOrbits}, which makes the opposite choice. This will also affect our statement of Yamamoto's Theorem 3.2.11, given as Theorem \ref{thm:flag} below.
\end{nrk}
For example, $\tl{\gamma} = {+}1_- 2_- 1_+ 2_+{-}$ is the default signed clan of $\gamma={+} 1 2 1 2 {-}$. Every symbol $c_i$ has a signature, which is just the symbol itself in case $c_i$ is $+$ or $-$.
\begin{ndefn} Given a default signed clan $\tl{\gamma}$, define a permutation ${\sigma} \in \mc{S}_{2n}$ which, for $i\leq n$:
	\begin{itemize}
		\item assigns $\sigma(i)=i$  and $\sigma(2n+1-i)= 2n+1-i$ if $c_i$ is a symbol with signature $+$.
		\item assigns $\sigma(i)=2n+1-i$ and $\sigma(2n+1-i)=i$ if $c_i$ is a symbol with signature $-$.
	\end{itemize} We call ${\sigma}$ the \emph{default permutation} associated to $\gamma$. 
\end{ndefn}
Note that $\sigma$ is an involution, and is the $\sigma'$ which results from choosing $\sigma''=id$ in the context of \cite{yamamotoOrbits}, Theorem 3.2.11. For instance, ${+}1212{-}$ has default permutation $1 5 4 3 2 6$ in one-line notation.  

\subsection{Sects for Skew-Symmetric Clans}
For this subsection, fix $B$ as the Borel subgroup of $G=Sp_{2n}$ consisting of upper triangular matrices, and $P$ and $L$ as defined by the condition (\ref{eq:PL}). The following is a specialization of \cite{yamamotoOrbits} Theorem 3.2.11, which gives representative flags for $L$-orbits in the isotropic flag variety $G/B$.
\begin{nthm} \label {thm:flag}Given a skew-symmetric $(n,n)$-clan $\gamma=c_1 \cdots c_{2n}$ with default permutation ${\sigma}$, define a flag $F_\bullet(\gamma)=\la \mb{v}_1, \dots, \mb{v}_{2n}\ra$ by making the following assignments.
	
	\begin{itemize}
		\item If $c_i = - $, set \[\mb{v}_i = \mb{e}_{\sigma(i)} \]
		\item
		If $c_i= +$, set
		\[ \mb{v}_i = \begin{cases} \mb{e}_{\sigma(i)} &\textrm{  if  } i\leq n,\\
		-\mb{e}_{\sigma(i)} &\textrm{  if  } i> n .
		\end{cases} 
		\] 
		\item If $c_i=c_j \in \N$ where $\tl{c_i}$ has signature $-$ and $\tl{c_j}$ has signature $+$ (that is, $i< j$), then set 
		\[ \mb{v}_i = \begin{cases}
		\frac{1}{\sqrt{2}}(\mb{e}_{\sigma(i)}+\mb{e}_{\sigma(j)})  &\textrm{  if  } i\leq n,\\
		-\frac{1}{\sqrt{2}}(\mb{e}_{\sigma(i)}+\mb{e}_{\sigma(j)}) &\textrm{  if  } i> n .
		\end{cases}
		\]
		and
		\[ \mb{v}_j= \frac{1}{\sqrt{2}}(\mb{e_{\sigma(i)}}-\mb{e_{\sigma(j)}}).\]
	\end{itemize}
	Then $F_\bullet(\gamma)$ is a representative flag for the $L$-orbit $Q_\gamma$ in $G/B$. Furthermore, if $g_\gamma$ is the matrix defined by letting $\mb{v}_i$ be its $i$\ts{th} column, then $Q_\gamma=Lg_\gamma B/B$. Matrices/flags obtained in this way from skew-symmetric clans constitute a full set of representative flags for $L$-orbits in $G/B$.
\end{nthm}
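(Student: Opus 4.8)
The plan is to recognize Theorem~\ref{thm:flag} as the type $CI$ specialization of \cite{yamamotoOrbits}, Theorem 3.2.11, so that the task splits into three parts: verifying that the prescribed vectors really do assemble into a full isotropic flag, checking that this flag carries the clan data $\gamma$, and confirming that the construction agrees with Yamamoto's general recipe once the sign convention of the preceding remark is accounted for.

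First I would show that the matrix $g_\gamma$ lies in $Sp_{2n}$, equivalently that $F_\bullet(\gamma)$ is a genuine full isotropic flag. Since $g_\gamma$ sends the standard isotropic flag $E_\bullet$ to $F_\bullet(\gamma)$ (as $g_\gamma \mb{e}_i = \mb{v}_i$), and since a symplectic transformation preserves both isotropy and the orthogonal-complement relation $V_{2n-i}=V_i^\perp$, it suffices to check the single identity $g_\gamma^t \Omega g_\gamma = \Omega$, i.e.\ $\Omega(\mb{v}_i,\mb{v}_j)=\Omega_{ij}$ for all $i,j$. The computation is organized by the type of the symbols $c_i,c_j$. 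The essential structural inputs are: (a) the default permutation $\sigma$ is the involution that transposes $i \leftrightarrow 2n+1-i$ exactly at the $-$-signature positions $i\le n$ and fixes the $+$-signature positions; (b) $\Omega_{a,b}=0$ unless $b=2n+1-a$, with $\Omega_{a,2n+1-a}=+1$ for $a\le n$ and $-1$ for $a>n$; and (c) the factors $\tfrac{1}{\sqrt 2}$ attached to a matching pair $c_i=c_j$ make the corresponding vectors a symplectically orthonormal pair, so that their contributions normalize correctly. Running through the cases (two signs, a sign against a pair, two pairs, and the two vectors of a single pair) then yields $g_\gamma^t\Omega g_\gamma = \Omega$; the $n=1$ cases $\gamma={+}{-}$ and $\gamma=11$ already exhibit the mechanism.

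Next I would verify that the clan extracted from $F_\bullet(\gamma)$ --- via the intersection pattern of the flag with the components $E_n$ and $W$ of the reference polarization, as encoded in \cite{yamamotoOrbits} --- is precisely $\gamma$. A $+$ or $-$ symbol sends $\mb{v}_i$ to a single standard basis vector lying in $E_n$ or in $W$ (the choice governed by $\sigma$ and by the $i\le n$ versus $i>n$ sign in the $+$ case), reproducing the corresponding signature; a matching pair $c_i=c_j$ produces the ``entangled'' vectors $\tfrac{1}{\sqrt2}(\mb{e}_{\sigma(i)}\pm\mb{e}_{\sigma(j)})$, whose first appearance at step $i$ and resolution at step $j$ reproduce the repeated natural number in positions $i,j$. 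Tracking the ranks of the relevant projections step by step recovers $\gamma$, placing $F_\bullet(\gamma)$ in $Q_\gamma$; since all elements of a right $B$-coset give the same flag and $L$ acts on the left, this is the same as $Q_\gamma = L g_\gamma B/B$.

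Finally I would close the loop with Yamamoto's theorem: our $\sigma$ is exactly the permutation $\sigma'$ obtained by taking $\sigma''=\mathrm{id}$ in the notation of \cite{yamamotoOrbits}, Theorem 3.2.11, and our ``default signed clan'' convention is the opposite sign choice flagged in the preceding remark, which only relabels the two vectors of each pair and the orientation of the $\pm$ assignments. With this dictionary in place, the fact that skew-symmetric $(n,n)$-clans parametrize the $L$-orbits on $G/B$ (equivalently the $B$-orbits on $G/L$, as recalled at the start of Section~\ref{C5:S1}) guarantees that letting $\gamma$ range over $\mc{Z}_n$ produces one representative per orbit, i.e.\ a full set. \emph{The main obstacle} is the bookkeeping in this matching step together with the sign and normalization tracking in the symplectic verification: one must ensure that the reversed sign convention and the $\tfrac{1}{\sqrt2}$ normalizations propagate consistently through $\sigma$, so that the resulting flag is simultaneously isotropic and carries the intended clan rather than its reverse or complement.
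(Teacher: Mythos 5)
Your proposal is correct, but it does substantially more than the paper, which in fact offers no proof of this statement at all: Theorem \ref{thm:flag} is presented there purely as a specialization of Yamamoto's Theorem 3.2.11, with the two surrounding remarks (the reversed sign convention for default signed clans, and the identification of $\sigma$ with Yamamoto's $\sigma'$ at $\sigma''=\id$) doing all the work of translation. Your third step --- matching conventions with Yamamoto --- is therefore exactly the paper's entire argument. Your first two steps, namely checking $g_\gamma^t \Omega g_\gamma = \Omega$ case by case and then recovering $\gamma$ from the intersection data of $F_\bullet(\gamma)$ with the reference polarization $E_n \oplus W$, constitute a direct verification that the paper never carries out; they are sound --- your structural facts (a), (b), (c) are all correct for this realization of $Sp_{2n}$ and $\Omega$, and small cases such as $11$, $1122$, and ${+}1212{-}$ confirm the mechanism, including the $\Pi_{1,1}$-type pairs where both entangled vectors sit in the first half of the flag --- and they buy a self-contained confirmation that the authors' modified sign convention still produces elements of $Sp_{2n}$ and labels orbits correctly, rather than trusting that the translation of conventions was performed consistently. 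The one caution is that your ``full set of representatives'' conclusion still rests on Yamamoto's parametrization of $L$-orbits by skew-symmetric clans (as does the paper's, via the citation at the start of Section \ref{C5:S1}), so the external reference cannot be eliminated entirely; only the construction and verification of the representatives themselves is made independent of it.
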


For example, the matrix representative for the clan $\gamma={+}1212{-}$ is 
\[ g_\gamma =
\begin{pmatrix}
1 & 0 & 0 & 0 & 0 & 0 \\
0 & 0 & \frac{1}{\sqrt{2}} & 0 & -\frac{1}{\sqrt{2}} &0 \\
0 & \frac{1}{\sqrt{2}} & 0 & -\frac{1}{\sqrt{2}} & 0 & 0 \\
0 & 0 & \frac{1}{\sqrt{2}} & 0 &  \frac{1}{\sqrt{2}} & 0 \\
0 & \frac{1}{\sqrt{2}} & 0 & \frac{1}{\sqrt{2}} & 0 & 0 \\
0 & 0 & 0 & 0 & 0 & 1
\end{pmatrix}.
\]
 In the other direction, one can always recover an isotropic flag $F_\bullet$ from a coset $gB$ by taking $V_i$ to be the span of the first $i$ columns of any matrix in $gB$. 

As in \cite{bcSects}, we define the \emph{base clan} of the clan $\gamma$ as the one obtained by replacing all signed natural numbers in $\tl{\gamma}$ by their signature. For example, ${+}1212{-}$ has base clan ${+}{-}{-}{+}{+}{-}$. Notice that the base clan of a skew-symmetric clan remains skew-symmetric, and consists only of $+$ and $-$ symbols. Base clans are in correspondence with closed $L$-orbits in $G/B$ (\cite{wyserThesis}).

The following lemma is the major step in identifying the sects.

\begin{nlem} \label{lem:clanorbit} Let $Q_\gamma$ and $Q_\tau$ be $L$-orbits in $G/B$ corresponding to skew symmetric \hbox{$(n,n)$-clans} $\gamma$ and $\tau$. Then $Q_\gamma$ and $Q_\tau$ lie in the same $P$-orbit of $G/B$ if and only if $\gamma$ and $\tau$ have the same base clan.
\end{nlem}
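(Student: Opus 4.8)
The plan is to isolate a single numerical invariant that is constant on $P$-orbits, show that it depends only on the base clan, and then close both directions with a dimension count. Since $P$ stabilizes the Lagrangian $E_n$, for any full isotropic flag $F_\bullet = \la \mb{v}_1,\dots,\mb{v}_{2n}\ra$ representing a point of $G/B$ the \emph{jump sequence} $\delta_k(F_\bullet):=\dim(V_k\cap E_n)$, $0\le k\le 2n$, is constant along the $P$-orbit of $F_\bullet$. Writing $\beta(\gamma)$ for the base clan of $\gamma$, the lemma will follow from three claims: (a) $\delta_k(F_\bullet(\gamma))=\delta_k(F_\bullet(\beta(\gamma)))$ for all $k$; (b) the jump sequence recovers the base clan; and (c) $F_\bullet\mapsto(\delta_k)_k$ separates the $P$-orbits.

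For (a) I would work through the projection $\rho\colon \C^{2n}\to\C^{2n}/E_n$ killing $\mb{e}_1,\dots,\mb{e}_n$, using $\dim(V_k\cap E_n)=k-\dim\rho(V_k)$. Both $\gamma$ and $\beta(\gamma)$ carry the same default permutation $\sigma$, because $\sigma$ reads only the signatures, which are unchanged by passage to the base clan; the flags $F_\bullet(\gamma)$ and $F_\bullet(\beta(\gamma))$ of Theorem~\ref{thm:flag} then differ only in the columns indexed by a matching pair $c_i=c_j\in\N$ with $i<j$. The crucial input is that for every such pair one has $\sigma(i)>n$ and $\sigma(j)\le n$; I would verify this by a short case analysis over the family types $\Pi_{1,1},\Pi_{1,2},\Pi_2$ from the proof of Theorem~\ref{5:kcycles}, checking in each case that the earlier index lands ``high'' and the later index ``low''. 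Granting this, writing $\bar{\mb{e}}_a:=\rho(\mb{e}_a)$, both $\rho(\mb{v}_i)$ and $\rho(\mb{v}_j)$ are nonzero scalar multiples of $\bar{\mb{e}}_{\sigma(i)}$ for $\gamma$, which matches the single contribution $\rho(\mb{v}_i^{\beta})=\pm\bar{\mb{e}}_{\sigma(i)}$ (with $\rho(\mb{v}_j^{\beta})=0$) for the base clan. Counting independent images of generators then yields $\dim\rho(V_k(\gamma))=\dim\rho(V_k(\beta(\gamma)))$ for every $k$, which is (a).

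Claim (b) is then immediate: for a base clan each $\mb{v}_m$ is a coordinate vector $\pm\mb{e}_{\sigma(m)}$, so for $k\le n$ the value $\delta_k$ increases at step $k$ exactly when $\sigma(k)\le n$, i.e.\ exactly when the $k$-th symbol is a $+$; thus the jump sequence recovers the first half of the base clan, hence by skew-symmetry all of it. For (c) I would count: the parabolic Bruhat decomposition gives $|P\backslash G/B|=|W_L\backslash W|=2^n n!/n!=2^n$, while the realized jump sequences are exactly the possible sets of jump positions in $\{1,\dots,n\}$, of which there are also $2^n$, each realized by a base-clan flag. The $P$-invariant map $\{P\text{-orbits}\}\to\{\text{jump sequences}\}$ is therefore a surjection between finite sets of equal cardinality, hence a bijection, which is exactly completeness (c).

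Assembling: if $\gamma$ and $\tau$ have the same base clan, then by (a) they have the same jump sequence, so by (c) they lie in the same $P$-orbit; conversely, if $Q_\gamma$ and $Q_\tau$ share a $P$-orbit they have the same jump sequence, and then (a) together with (b) forces $\beta(\gamma)=\beta(\tau)$. The main obstacle is step (a), and within it the uniform verification that the earlier index of each matching pair maps ``high'' under $\sigma$ while the later index maps ``low''; this is the one place a genuine case analysis is needed, and it is exactly where the default-signed-clan convention (earlier occurrence $\mapsto -$) is essential.
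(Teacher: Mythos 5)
Your proposal is correct, but it takes a genuinely different route from the paper's. The paper proves both directions directly: for sufficiency it constructs explicit parabolic elements $p^{r}$ and $p^{r,s}$ (one for each $\Pi_2$ pair and each family) carrying the representative flag of the base clan to an equivalent presentation of the representative flag of $\gamma$, and for necessity it uses the block-triangular shape of $P$ to show that flags attached to distinct base clans cannot be moved to one another. You instead construct a complete $P$-invariant: the incidence sequence $\delta_k=\dim(V_k\cap E_n)$, constant on $P$-orbits because $P=\operatorname{Stab}(E_n)$, which depends only on the base clan (your (a)), recovers the base clan (your (b)), and separates $P$-orbits by counting (your (c)). Your ``crucial input'' --- that under the default signing the earlier index of every matching pair has $\sigma(i)>n$ while the later has $\sigma(j)\le n$ --- is true, and the case check over $\Pi_{1,1}$, $\Pi_{1,2}$, $\Pi_2$ goes through exactly as you indicate (for the mirrored pair of a $\Pi_{1,1}$ family sitting entirely in the second half, the signatures of $c_i,c_j$ in the first half give what is needed); this case analysis plays the role of the paper's two-case analysis. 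What your route buys: no matrix constructions and no verification that explicit maps are symplectic and lie in $P$ (checks the paper relegates to footnotes as routine); it also identifies the sect invariant with the natural Schubert-cell invariant on $\Lambda(n)$. What it costs: you need the count $|P\backslash G/B|=|W_P\backslash W|=2^n$ from the Bruhat decomposition, and you must justify that the number of \emph{realized} jump sequences is $2^n$ rather than $\binom{2n}{n}$ --- either restrict the invariant to $k\le n$ (which is all that (a) and (b) use, and then binary increment sequences give the bound $2^n$ immediately), or note that the canonical extension $V_{2n-i}=V_i^{\perp}$ together with $E_n^{\perp}=E_n$ forces $\delta_{2n-i}=n-i+\delta_i$, so the second half of the sequence is determined by the first. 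With that single point made precise, your surjection-between-equal-finite-sets argument closes the proof correctly. The one thing the paper's constructive argument provides that yours does not is an explicit description of how to move flags within a given $P$-orbit.
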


\begin{proof} First, we prove necessity by showing that if $\gamma$ has base clan $\tau$, then the representing flags for each $L$-orbit $Q_\gamma$ and $Q_\tau$ lie in the same $P$-orbit. Then all clans with base clan $\tau$ will lie in the same $P$-orbit. More precisely, we exhibit parabolic group elements that can iteratively transform the representing flag for $\tau$ into the representing flag for $\gamma$.
	
	Let $\gamma=c_1\cdots c_{2n}$ and $\tau=t_1\cdots t_{2n}$, and let $F_\bullet(\gamma)=\la \mb{v}_1,\dots, \mb{v}_{2n} \ra$ and $F_\bullet(\tau)=\la \mb{u}_1, \dots, \mb{u}_{2n} \ra$ be the corresponding flags constructed by Theorem \ref{thm:flag}. As each clan has the same signature at symbols of the same index, they have the same default permutation. Then, we have two kinds of cases to examine.
	
	\textbf{Case 1:} There is a pair of numbers $c_i=c_{2n+1-i}$ with $i\leq n$. Then we have
	\[(\mb{v}_i, \mb{v}_{2n+1-i}) = (\frac{1}{\sqrt{2}}(\mb{e}_r+\mb{e}_{2n+1-r}), \frac{1}{\sqrt{2}}( \mb{e}_r -\mb{e}_{2n+1-r})) \quad \textrm{and} \quad (\mb{u}_i,\mb{u}_{2n+1-i}) = (\mb{e}_r, -\mb{e}_{2n+1-r}) \]
	for some  $n+1 \leq r \leq 2n$. 
	
	Let $p^{r}$ denote the parabolic subgroup element\footnote{It is easy to check that the matrix of this map satisfies the defining condition \ref{eq:PL}. That the matrix of this map is symplectic according to the condition in \ref{eq:Sp} is routine (if lengthy) linear algebra. } defined by
	\begin{align*} 
	p^r:\ & \mb{e}_r \lmt \mb{e}_r+\mb{e}_{2n+1-r},  \\
	&\mb{e}_i\lmt \mb{e}_i \quad\text{ for } i\neq r.
	\end{align*}
	Note that each pair of vectors
	\[ (\mb{e}_r+\mb{e}_{2n+1-r}, \mb{e}_r - \mb{e}_{2n+1-r}) \quad \textrm{and} \quad (\mb{e}_r+\mb{e}_{2n+1-r}, -\mb{e}_{2n+1-r}) \]
	generates the same subspace, so it doesn't matter which one of the vectors $\frac{1}{\sqrt{2}}(\mb{e}_r-\mb{e}_{2n+1-r})$ or $\mb{e}_r$ appears as $\mb{v_j}$ in the flag $F_\bullet(\gamma)$. Then, the action of $p^r$ on $F_\bullet(\tau)$ has the effect of taking $\mb{u}_i$ to the span of $\mb{v}_i$, making the spans of $(p^r\cdot \mb{u}_i,p^r \cdot \mb{u}_{2n+1-i})$ and $(\mb{v}_i, \mb{v}_{2n+1-i})$ the same.
	
	\textbf{Case 2:} In this case, we have $c_i=c_j$ with $i<j\neq 2n+1-i$ so that by skew-symmetry, there is another pair of natural numbers $c_{2n+1-j}=c_{2n+1-i}$.  Without loss of generality, we can assume $i\leq n$. In this case, Theorem \ref{thm:flag} will yield a flag $F_\bullet(\gamma)$ with 
	\[(\mb{v}_i, \mb{v}_{j}) = (\frac{1}{\sqrt{2}}(\mb{e}_r+\mb{e}_{2n+1-s}),\frac{1}{\sqrt{2}}( \mb{e}_r -\mb{e}_{2n+1-s})) \]
	and
	\[(\mb{v}_{2n+1-j}, \mb{v}_{2n+1-i}) = (\pm\frac{1}{\sqrt{2}}(\mb{e}_s+\mb{e}_{2n+1-r}),\frac{1}{\sqrt{2}}( \mb{e}_s -\mb{e}_{2n+1-r})),\]
	where $n<r={\sigma}(i)$ and $n<s={\sigma}(2n+1-j)$. We also obtain the flag $F_\bullet(\tau)$ with 
	\[(\mb{u}_i, \mb{u}_{j})= (\mb{e}_r, \pm\mb{e}_{2n+1-s}) \]
	and
	\[(\mb{u}_{2n+1-j}, \mb{u}_{2n+1-i}) = (\mb{e}_s,-\mb{e}_{2n+1-r}).\]
	
	Then define a linear map by
	\begin{align*} 
	p^{r,s}:\ & \mb{e}_r \lmt \mb{e}_r+\mb{e}_{2n+1-s},  \\
	&\mb{e}_s \lmt \mb{e}_s+\mb{e}_{2n+1-r},  \\
	&\mb{e}_i\lmt \mb{e}_i \quad\text{ for } i\neq r,s.
	\end{align*}
	It is again routine to check that this map defines an element of $P$, so $p^{r,s} \cdot F_\bullet(\tau)$ is a flag in the same $P$-orbit. Similar to the previous case, this map takes $\mb{u}_i$ to the span of $\mb{v}_i$ and $\mb{u}_{2n+1-j}$ to the span of $\mb{v}_{2n+1-j}$, yielding pairs with the same span:
	\[ (p^{r,s}\cdot \mb{u}_i,p^{r,s} \cdot \mb{u}_{j})\quad \text{and} \quad (\mb{v}_i, \mb{v}_{j}),\]
	and
	\[ (p^{r,s}\cdot \mb{u}_{2n+1-j},p^{r,s} \cdot \mb{u}_{2n+1-i})\quad \text{and} \quad (\mb{v}_{2n+1-j}, \mb{v}_{2n+1-i}).\]
	
	Now, after we act on the flag $F_\bullet(\tau)$ by the appropriate element of the form $p^{r}$ for each pair of natural numbers $c_i=c_{2n+1-i}$ in $\gamma$, and the appropriate element $p^{r,s}$ for each family
	\[\{c_i=c_j, c_{2n+1-j}=c_{2n+1-i} \mid j\neq 2n+1-i\},\]
	then we obtain a flag which is an equivalent presentation of $F_\bullet(\gamma)$. Thus, $Q_\gamma$ is in the same $P$-orbit as $Q_\tau$. 
	
	To show the converse, it suffices to show that $L$-orbits corresponding to distinct base clans $\tau$ and $\lambda$ lie in distinct $P$-orbits. Let $\tau=t_1\cdots t_{2n}\neq\lambda=l_1 \cdots l_{2n}$ and let $F_\bullet(\tau)=\la \mb{u}_1,\dots, \mb{u}_{2n} \ra$ and $F_\bullet(\lambda)=\la \mb{w}_1,\dots, \mb{w}_{2n} \ra$ be flags constructed to represent each orbit using Theorem \ref{thm:flag}. Now let $i$ be the least index such that $t_i \neq l_i$. Without loss of generality, we may assume that $t_i=-$ and $l_i=+$, and that $i\leq n$. Then we have $\mb{u}_i=\mb{e}_r$ for some $n+1 \leq r \leq n$ and $\mb{w}_i = \mb{e}_q$ for some $1\leq q \leq n$. For these flags to be in the same $P$-orbit we would need to be able to carry $\mb{e}_q$ to a vector with non-zero $\mb{e}_r$ component via some $p\in P$. This would force a non-zero entry in the $(r,q)$-entry of the matrix of $p$, but since $p$ has a block-diagonal form with zero $(i,j)$-entry whenever $i>n$ and $j\leq n$, this is impossible. Thus, the $L$-orbits $Q_\tau$ and $Q_\lambda$ are in distinct $P$-orbits.
	
\end{proof}

Note that there is an isomorphism 
\begin{align}
\vfi: LgB &\lra B g^{-1}L \label{cosetswap} \\
lgb & \lmt (lgb)^{-1} \nonumber
\end{align}
for any such double coset. Thus, given a clan $\gamma$, we have a bijection between $L$-orbits in  $G/B$ and $B$-orbits in $G/L$ given by
\[Q_\gamma=Lg_\gamma B/B \lmt Lg_\gamma B \xra{\ \vfi \ } Bg_\gamma^{-1}L \lmt Bg_\gamma^{-1}L/L=:R_\gamma. \]
Pushing the consequences of Lemma \ref{lem:clanorbit} through this association and applying the projection $\pi:G/L\to G/P$, we see that the $B$-orbits of $G/L$ which project to the Schubert cell $B{g_\gamma}^{-1}P/P$ are exactly the set of $R_\tau$ where $\tau$ has the same base clan as $\gamma$. This yields the following, with $R_\gamma = Bg_\gamma^{-1}L/L$ as above.

\begin{nprop} \label{projbase}
	Let $R_\gamma$ and $R_\tau$ be $B$-orbits in $G/L$ corresponding to clans $\gamma$ and $\tau$, and let \hbox{$\pi: G/L \to G/P$} denote the canonical projection. Then $\pi(R_\gamma)= \pi(R_\tau)$ if and only if $\gamma$ and $\tau$ have the same base clan.
\end{nprop}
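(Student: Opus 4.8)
The plan is to reduce the statement entirely to Lemma \ref{lem:clanorbit} by computing the image $\pi(R_\gamma)$ explicitly and then transporting the resulting equivalence through the inversion isomorphism $\varphi$ of (\ref{cosetswap}). The geometry has already been packaged into Lemma \ref{lem:clanorbit}; what remains is formal double-coset bookkeeping.

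First I would identify $\pi(R_\gamma)$ as a single Schubert cell. Since $R_\gamma = Bg_\gamma^{-1}L/L$ and $L \subseteq P$, every coset $b g_\gamma^{-1} l L$ in $R_\gamma$ maps under $\pi$ to $b g_\gamma^{-1} l P = b g_\gamma^{-1} P$, using $lP = P$ for $l \in L$. Hence $\pi(R_\gamma) = Bg_\gamma^{-1}P/P$, the Schubert cell attached to the double coset $Bg_\gamma^{-1}P$. Consequently, $\pi(R_\gamma) = \pi(R_\tau)$ as subsets of $G/P$ if and only if $Bg_\gamma^{-1}P = Bg_\tau^{-1}P$ as double cosets in $G$.

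Next I would take inverses to swap the roles of $B$ and $P$. Because $B$ and $P$ are subgroups, $(Bg_\gamma^{-1}P)^{-1} = Pg_\gamma B$, so the equality $Bg_\gamma^{-1}P = Bg_\tau^{-1}P$ is equivalent to $Pg_\gamma B = Pg_\tau B$. This last equality says exactly that $g_\gamma B$ and $g_\tau B$ lie in a common $P$-orbit of $G/B$; and since $L \subseteq P$, the $L$-orbits $Q_\gamma = Lg_\gamma B/B$ and $Q_\tau = Lg_\tau B/B$ sit inside the $P$-orbits $Pg_\gamma B/B$ and $Pg_\tau B/B$, so $Q_\gamma$ and $Q_\tau$ lie in the same $P$-orbit of $G/B$ precisely when $Pg_\gamma B = Pg_\tau B$. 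Chaining these equivalences gives $\pi(R_\gamma) = \pi(R_\tau)$ if and only if $Q_\gamma$ and $Q_\tau$ lie in the same $P$-orbit of $G/B$, and Lemma \ref{lem:clanorbit} rewrites the latter condition as "$\gamma$ and $\tau$ have the same base clan," establishing both directions simultaneously.

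The only genuinely delicate point is the bookkeeping around $\varphi$: one must be careful that inversion really converts the left-$P$, right-$B$ structure governing $P$-orbits in $G/B$ into the left-$B$, right-$P$ structure governing the fibers of $\pi$, and in particular verify that $\pi(R_\gamma)$ is a single Schubert cell rather than a union of several. Once the double-coset inversion identity $(Bg_\gamma^{-1}P)^{-1} = Pg_\gamma B$ is pinned down and the collapse $g_\gamma^{-1}lP = g_\gamma^{-1}P$ is observed, the argument is purely formal and invokes no further geometry beyond Lemma \ref{lem:clanorbit}.
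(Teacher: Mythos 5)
Your proof is correct and takes essentially the same route as the paper: the paper likewise deduces the proposition from Lemma~\ref{lem:clanorbit} by transporting it through the inversion isomorphism $\varphi$ of (\ref{cosetswap}) and the projection $\pi$. Your write-up simply makes explicit the double-coset bookkeeping (namely $\pi(R_\gamma)=Bg_\gamma^{-1}P/P$ and $(Bg_\gamma^{-1}P)^{-1}=Pg_\gamma B$) that the paper compresses into ``pushing the consequences of Lemma~\ref{lem:clanorbit} through this association.''
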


Having successfully grouped clans according to their base clans, we find it appropriate to also call the collection of clans with a base clan $\gamma$ the {\emph{sect}} of $\gamma$, denoted $\Sigma_\gamma$. By abuse of terminology, we will also use this term to describe collections of any of the objects ($B$-orbits, $L$-orbits, double cosets, etc.) parameterized by $\Sigma_\gamma$.

\subsection{Sects and Schubert Cells}\label{sasc}
As in \cite{billeyLak} \S 3.3, we fix $T$ as the maximal torus of diagonal matrices in $G$, and then the Weyl group $W=N_G(T)/T$ can be identified with the permutations in the subgroup of the symmetric group $\mc{S}_{2n}$ defined by the condition,
\[W\cong \{ \sigma\in \mc{S}_{2n} \mid \sigma(2n+1-i)=2n+1-\sigma(i) \text{ for all } i \}.\]
This happens to be isomorphic to the hyperoctahedral group $\mc{C}_n$. A Weyl group element given by a permutation $\sigma\in\mc{S}_{2n}$ can also be represented by a matrix \hbox{$w(\sigma) \in N_G(T)$} where the $i$\ts{th} column is $\pm\mb{e}_{\sigma(i)}$. Thus, $w(\sigma)$ will be almost the permutation matrix for $\sigma$, only with some $-1$ entries instead of $1$ entries to ensure that it is a symplectic group element.

The choice of $B$ as the Borel subgroup of upper-triangular matrices determines the set of positive roots as
\[\Phi^+= \{ (\ep_i\pm \ep_j), \mid 1\leq i < j \leq n \} \cup \{ 2\ep_i \mid 1\leq i \leq n \},\]
where $\pm\ep_i$ denotes the torus character that takes $\diag(t_1,\dots, t_{2n}) \mapsto t_i^{\pm 1}$.
We choose simple roots 
\[ \del=\{ \alpha_i=\ep_i-\ep_{i+1}\mid 1\leq i \leq n-1 \} \cup \{ \alpha_n=2\ep_{n} \},\]
so that our maximal parabolic subgroup $P$ corresponds to the the subset $\del \wo \{\alpha_n\}$. The Weyl group of $P$ is denoted $W_P$ and is isomorphic to a copy of  $\mc{S}_n$ with respect to which the minimal coset representatives of $W/W_P$ can be identified as the permutations $\sigma \in W$ which also satisfy
\[ \sigma(1)< \sigma(2) < \dots < \sigma(n).\]
The set of minimal coset representatives is denoted $W^P$. The values $\sigma(1),\dots, \sigma(n)$ become the indices of the standard basis vectors which appear (up to sign) in the first $n$ columns of the matrix $w(\sigma)$.\footnote{It can be arranged so that these columns are exactly standard basis vectors, as in Theorem \ref{thm:flag}.} 

Let $[ m ]:= \{1,\dots, m\}$. The coset $w(\sigma)P$ includes all of the $\mc{S}_n$-many permutations of the set 
\[ \{ \mb{e}_{\sigma (1)}, \dots , \mb{e}_{\sigma (n)} \} \] 
by the (right) action of the Levi subgroup on the columns of $w(\sigma)$.
So order does not actually matter, and the Schubert cell $Bw(\sigma)P/P$ can be understood as the Borel orbit of the isotropic subspace spanned by $\{ \mb{e}_{\sigma (1)}, \dots , \mb{e}_{\sigma (n)} \}$. 
Hence, we can make the identification 

\[
\text{Schubert cells } C_I  \llra \lt\{ \begin{aligned}  \text{Subsets } I \text{ of } [2n] \text{ such that }  \ab{I}=n & \\  \text{ and if } i\in I \text{ then } 2n+1-i\not\in I&\end{aligned}\rt\}.
\]
Since for each opposite pair of indices we choose just one for membership in the subset $I$, there are $2^n$ cells total. Each cell is also associated to a maximal isotropic subspace of the form
\[ V_I : = \spn( \{\mb{e}_i \mid i \in I \}) .\]

We can bijectively associate base clans to sets $I$ by defining $\gamma_I=c_1 \dots c_{2n}$ by
\begin{equation}\label{baseclanI} c_i =\begin{cases}
+ \quad \text{ if } i \in I \\
- \quad \text{ if } i \notin I .
\end{cases}\end{equation}
Note that under this assignment, after building the default flag, $F_\bullet({\gamma_I})=\la \mb{v}_1, \dots, \mb{v}_{2n}\ra$, it easy to check that $\spn(\{\mb{v}_1,\dots,\mb{v}_{n}\})=V_I$. In other words, $C_I = B g_{\gamma_I} P/P$. Finally, we have our promised result.

\begin{nthm} \label{decomp} Let $C_I$ be a Schubert cell of $G/P$, and $\pi: G/L \to G/P$ the natural projection. Associate to $I$ a base clan $\gamma_I$ as in equation \ref{baseclanI}, and denote the sect of $\gamma_I$ by $\Sigma_I$. If $R_\gamma$ denotes the $B$-orbit of $G/L$ associated to the clan $\gamma$, then
	\begin{equation}
	\pi^{-1}(C_I) = \bigsqcup_{\gamma \in \Sigma_I} R_\gamma.
	\end{equation}
\end{nthm}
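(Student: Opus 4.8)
The plan is to deduce the statement from Proposition \ref{projbase} together with one direct identification of the Schubert cell $\pi(R_{\gamma_I})$ with $C_I$. First I would record the formal reductions. Since the projection $\pi\colon G/L \to G/P$ is $B$-equivariant and the Schubert cell $C_I$ is a single $B$-orbit (hence $B$-stable), the preimage $\pi^{-1}(C_I)$ is $B$-stable. As the orbits $R_\gamma$ range over precisely the finitely many $B$-orbits of $G/L$, this preimage is automatically a disjoint union of some of them, so $\pi^{-1}(C_I) = \bigsqcup_{\gamma \in S} R_\gamma$ with $S = \{\gamma \mid R_\gamma \sus \pi^{-1}(C_I)\}$. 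Moreover, since $R_\gamma$ is a single $B$-orbit and $\pi$ is equivariant, its image $\pi(R_\gamma) = B\cdot\pi(x)$ is again a single $B$-orbit, i.e.\ a Schubert cell; as two $B$-orbits are equal or disjoint, $R_\gamma \sus \pi^{-1}(C_I)$ is equivalent to $\pi(R_\gamma) = C_I$. Thus the whole problem reduces to proving $S = \Sigma_I$.

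Next I would reformulate $\pi(R_\gamma)$. By the coset-swap isomorphism \ref{cosetswap}, $R_\gamma = B g_\gamma^{-1} L / L$, so $\pi(R_\gamma) = B g_\gamma^{-1} P / P$. Proposition \ref{projbase} gives, for any two clans, $\pi(R_\gamma) = \pi(R_\tau)$ if and only if $\gamma$ and $\tau$ have the same base clan. Specializing $\tau = \gamma_I$ and recalling that $\gamma_I$, built only from $+$ and $-$ via equation \ref{baseclanI}, is its own base clan, this says $\pi(R_\gamma) = \pi(R_{\gamma_I})$ exactly when the base clan of $\gamma$ is $\gamma_I$, that is, exactly when $\gamma \in \Sigma_I$. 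Hence $S = \Sigma_I$ follows once I verify the single equality $\pi(R_{\gamma_I}) = C_I$.

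This last identification is the step demanding care, because $C_I = B g_{\gamma_I} P/P$ by the discussion preceding the theorem, whereas $\pi(R_{\gamma_I}) = B g_{\gamma_I}^{-1} P / P$ carries the inverse introduced by $\vfi$. To reconcile them I would exploit that the default permutation $\sigma = \sigma_{\gamma_I}$ of a base clan is an involution. By Theorem \ref{thm:flag}, each column of $g_{\gamma_I}$ is $\pm \mb{e}_{\sigma(i)}$, so $g_{\gamma_I}$ is a signed permutation matrix lying in $N_G(T)$ and mapping to $\sigma$ in $W = N_G(T)/T$. Its inverse $g_{\gamma_I}^{-1}$ is then likewise a signed permutation matrix in $N_G(T)$, mapping to $\sigma^{-1} = \sigma$, the same Weyl group element. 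Therefore $g_{\gamma_I}^{-1} \in T\, g_{\gamma_I}$, and since $T \sus B$ we obtain $B g_{\gamma_I}^{-1} P = B g_{\gamma_I} P$, i.e.\ $\pi(R_{\gamma_I}) = C_I$.

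Combining the three steps yields $S = \{\gamma \mid \pi(R_\gamma) = C_I\} = \{\gamma \mid \pi(R_\gamma) = \pi(R_{\gamma_I})\} = \Sigma_I$, which is the desired decomposition. The main obstacle is exactly this final point: the correspondence between $L$-orbits in $G/B$ and $B$-orbits in $G/L$ passes through the inverse, so one must confirm that replacing $g_{\gamma_I}$ by $g_{\gamma_I}^{-1}$ leaves the Schubert cell unchanged---a fact hinging on $\sigma_{\gamma_I}$ being an involution and on the sign ambiguities in $g_{\gamma_I}$ being absorbed by the torus $T \sus B$.
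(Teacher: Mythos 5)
Your proposal is correct and follows essentially the same route as the paper: $B$-equivariance reduces the theorem to Proposition \ref{projbase} plus the single identification $\pi(R_{\gamma_I}) = B g_{\gamma_I}^{-1}P/P = C_I$. The only minor difference is in that last step, where the paper explicitly computes the columns of $g_{\gamma_I}^{-1}$ and notes they agree with those of $g_{\gamma_I}$ up to sign (so the first $n$ columns span the same subspace, giving $g_{\gamma_I}^{-1}P = g_{\gamma_I}P$), whereas you absorb the discrepancy on the left into $T \sus B$ by observing that $g_{\gamma_I}^{\pm 1}$ represent the same Weyl group element since $\sigma = \sigma^{-1}$; both arguments rest on the same underlying fact that $g_{\gamma_I}$ is a monomial matrix whose underlying permutation is an involution.
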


\begin{proof} The pre-image of $C_I$ decomposes as a disjoint union of $B$-orbits $R_\gamma$ as a consequence of the fact that $\pi$ is $G$-equivariant, so in particular $\pi$ is $B$-equivariant. It remains to determine for which $\gamma$ we have $\pi(R_\gamma)=C_I$. Proposition \ref{projbase} tells us that $B$-orbits $R_\gamma$ and $R_\tau$ project to the same Schubert cell if and only if they are members of the same sect. Thus, we only have to prove that the $B$-orbit $R_{\gamma_I}$ projects to the Schubert cell $C_I$.
	
	We know that $\pi(R_{\gamma_I})=B g_{\gamma_I}^{-1} P /P$.  Observe that from the construction of $g_{\gamma_I}$, if we denote the $i$\ts{th} column of $g_{\gamma_I}^{-1}$ as $\mb{w}_i$, then 
	
	\[\mb{w}_i= \begin{cases} - \mb{e}_{\sigma(i)} &\textrm{  if  } i\leq n \text{  and  } c_i=-,\\
	\mb{e}_{\sigma(i)} & \text{  otherwise.}
	\end{cases} 
	\]
	Because this differs from the columns of $g_{\gamma_I}$ only possibly by a sign, the first $n$ columns span the same subspace, and in fact 
	\[ g_{\gamma_I}^{-1} P = g_{\gamma_I} P.\]
	
	This implies that 
	\[\pi(R_{\gamma_I})= B g_{\gamma_I}^{-1}P/ P = B  g_{\gamma_I}P/P = C_I.\]
\end{proof}

\section{The Big Sect} \label{sec:big}
Here, we are going to analyze the number of clans lying in the largest sect over $\Lambda(n)$, which we denote by $\epsilon_n$. The set of all such clans will be denoted by $\mc{E}_n$.

A clan lies in the largest sect if and only if it has a base clan in the following form $$\underbrace{{-} {-} \cdots {-} {-} {-} {-}}_{\text{first}\ n\text{-spots}} {+} {+}\cdots {+} {+} {+}. $$ This follows from the fact that this base clan is $\gamma_I$ where $V_I=\spn\{\mb{e}_{n+1}, \dots, \mb{e}_{2n}\}$ whose $B$-orbit $C_I$ is the dense Schubert cell in $\Lambda(n)$, just as it is in $\Gr(n, \C^{2n})$. Hence, a clan lies in the largest sect if and only if its only natural numbers are from $\Pi_{1,2}$ families and $\Pi_2$ pairs.

Let $\alpha$ denote the number of $\Pi_{1,2}$ families and $\beta$ the number of $\Pi_2$ pairs in an arbitrary clan $\gamma \in \mc{E}_n$. Then $2\alpha + \beta= k$ is 
the total number of pairs in $\gamma = c_1 \cdots c_n\ c_{n+1}\ \cdots c_{2n}$. 
To see in how many different ways these pairs of indices can be situated in such $\gamma$, we start by choosing $\beta$ spots from the first $n$ positions in $\gamma$. 
Obviously, this can be done in ${n\choose \beta}$ many different ways. Then, we count different ways of choosing $\alpha$ pairs within the $n-\beta$ spots remaining (among the first $n$) to place the families of type $\Pi_{1,2}$. We have ${{n-\beta}\choose {2\alpha}}$ spots to choose from, and $\frac{(2\alpha)!}{2^{\alpha} \alpha!}$ different ways to form pairs from these spots. This proves the following corollary.

\begin{ncor}\label{sectnumber}
The number of clans in the largest sect is given by 
\begin{align*}
\epsilon_n = \sum_{\substack{k=0 \\ 2\alpha + \beta = k}}^{n} {n \choose \beta}{{n-\beta}\choose {2\alpha}} \frac{(2\alpha)!}{2^{\alpha} \alpha!}, 
\hspace{.5cm} \text{ or equivalently, }  \hspace{.5cm}
\epsilon_n = \sum_{\substack{k=0 \\ 2\alpha + \beta = k}}^{n} {n \choose \beta} \frac{(n - \beta)!}{(n-\beta - 2\alpha)! 2^{\alpha} \alpha!}.
\end{align*} 

\end{ncor}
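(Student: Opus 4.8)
The plan is to turn the characterization established just above into an explicit parametrization of $\mc{E}_n$ and then count directly. A clan $\gamma \in \mc{E}_n$ has base clan ${-}\cdots{-}{+}\cdots{+}$, so all of its matching natural numbers sit in $\Pi_{1,2}$ families and $\Pi_2$ pairs, while every remaining symbol is a forced sign: a $-$ in each unused first-half slot and a $+$ in each unused second-half slot. The point I would stress first is that, in contrast with the count of all skew-symmetric clans (where Lemma \ref{pmlemma} supplies a factor $2^{n-k}$), fixing the base clan removes all freedom in the signs. Consequently a clan in $\mc{E}_n$ is determined entirely by the placement of its natural-number pairs, and by skew-symmetry it is enough to record this placement among the first $n$ coordinates.

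I would then stratify by pair type, writing $\alpha$ for the number of $\Pi_{1,2}$ families and $\beta$ for the number of $\Pi_2$ pairs, so that $k = 2\alpha + \beta$ counts the pairs. Each $\Pi_2$ pair uses one first-half slot $i$ (its mirror $2n+1-i$ being forced), and each $\Pi_{1,2}$ family $((i,j),(2n+1-j,2n+1-i))$ uses exactly two first-half slots, namely $i$ and $2n+1-j$; hence the occupied first-half slots number $\beta + 2\alpha = k$, which is also why the outer sum runs only up to $n$. The determining data then splits into three independent choices: pick the $\beta$ slots carrying $\Pi_2$ pairs in $\binom{n}{\beta}$ ways; pick $2\alpha$ of the remaining $n-\beta$ slots to carry families in $\binom{n-\beta}{2\alpha}$ ways; and partition those $2\alpha$ slots into $\alpha$ unordered pairs in $\frac{(2\alpha)!}{2^\alpha \alpha!}$ ways. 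Summing the product over all $k$ from $0$ to $n$ subject to $2\alpha+\beta=k$ gives the first formula, and the second follows from $\binom{n-\beta}{2\alpha}(2\alpha)! = (n-\beta)!/(n-\beta-2\alpha)!$.

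The step that needs real care, rather than mere bookkeeping, is the third choice: I must confirm that a perfect matching of the $2\alpha$ selected first-half slots corresponds to exactly one valid $\Pi_{1,2}$ family, with no overcounting or omission. Concretely, I would check that an unordered pair of slots $\{a,b\}$ with $a,b\le n$ forces the partners $a \leftrightarrow 2n+1-b$ and $b \leftrightarrow 2n+1-a$, producing the single family $((a,2n+1-b),(b,2n+1-a))$; that this family satisfies the defining inequality $i\le n<j$; and that interchanging $a$ and $b$ merely relabels the two constituent pairs of the same family, since a family is an unordered pair of pairs. This is exactly what licenses counting matchings, and hence the factor $\frac{(2\alpha)!}{2^\alpha \alpha!}$ rather than an ordered count, and it is the place where the pair-of-pairs structure could most easily be miscounted. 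Exhibiting the inverse map, which reads off $\alpha$, $\beta$, and the slot matching from a given clan in $\mc{E}_n$, then upgrades the enumeration to a genuine bijection and finishes the argument.
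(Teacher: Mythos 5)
Your proposal is correct and follows essentially the same argument as the paper: the same characterization of the big sect via the base clan ${-}\cdots{-}{+}\cdots{+}$, the same stratification by the number $\alpha$ of $\Pi_{1,2}$ families and $\beta$ of $\Pi_2$ pairs, and the same three-factor count ${n \choose \beta}{n-\beta \choose 2\alpha}\frac{(2\alpha)!}{2^\alpha \alpha!}$. Your additional check that each unordered pair of first-half slots determines exactly one $\Pi_{1,2}$ family (so no factor of $2^\alpha$ as in the count of all skew-symmetric clans) is a worthwhile explicit confirmation of a point the paper leaves implicit, but it does not change the approach.
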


The first values of $\epsilon_n$, beginning with $n= 1$, are $2, 5, 14, 43, 142, 499, 1850, \dots$. This is in fact the number of self-inverse partial permutations, also known as partial involutions, (\href{http://oeis.org/A005425}{A005425}), as we show in the next section.

\subsection{Partial Involutions}
In this section, we will show that the set of all clans in the largest sect $\mc{E}_n$ is in one-to-one correspondence with the set of all partial involutions on $n$ elements.
We refer to~\cite{bagnocongruence} and Chapter 15 of \cite{CCA} for background theory.
Recall that a \emph{partial permutation} is a
map 
\[\pi : \{1, \dots, m\} \mapsto \{0, \dots, n\}\] satisfying the following rule: 
\begin{itemize}
	\item if $\pi(i) = \pi(j)$ and $\pi(i) \neq 0$, then $i = j$ for each $1 \leq i, j \leq m$.
\end{itemize}
A partial permutation matrix $\pi$ can be represented by an $m\x n$ matrix $(x_{ij})$, where $x_{ij}$ is 1 if and only if $\pi (i) = j$, and
0 otherwise. Note that under this convention we view our matrices as acting on vectors from the right. Partial permutations are sometimes called \emph{rook placements}, and in case $m =n$, they form a monoid under matrix multiplication called the \emph{rook monoid} and denoted $\mc{R}_n$. 

\begin{ndefn}
A partial involution on $n$ elements is a partial permutation which is represented by a symmetric $n \x n$ partial permutation matrix. The set of partial involutions on $n$ elements will be denoted by $\mc{P}_n$ and its cardinality by $P_n$.
\end{ndefn}

We will prove that $\epsilon_n = P_n$ by exhibiting an explicit bijection between
the clans in the largest sect and the partial involutions.
This will allow us to make use of a
 known recursive formula for the partial involutions to give a generating function for the sequence of $\epsilon_n$'s. A direct proof of the recurrence using the formula of Corollary~\ref{sectnumber} may be more difficult to achieve. Let us now state this recurrence relation.

\begin{nprop}\label{sectrec}
	Taking $P_0 = 1$, the numbers $P_n$ satisfy the recurrence relation
	\begin{eqnarray*}
	P_n = 2 P_{n-1} + (n-1) P_{n-2}
	\end{eqnarray*}
	for all $n > 1$.
\end{nprop}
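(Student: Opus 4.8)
The plan is to give a short, self-contained combinatorial proof by conditioning on the fate of the largest index $n$. First I would unpack the structural description implicit in the definition of a partial involution on $[n]$ as a symmetric partial permutation matrix: each index $i \in [n]$ is in exactly one of three mutually exclusive states. Either row $i$ (equivalently, by symmetry, column $i$) is empty, so $i$ is \emph{unused}; or there is a $1$ in position $(i,i)$, so $i$ is a \emph{fixed point}; or there is a $1$ in some position $(i,j)$ with $j \neq i$, in which case symmetry forces a $1$ in $(j,i)$ and $i$ is \emph{paired} with $j$. Thus an element of $\mc{P}_n$ is exactly a partition of a subset of $[n]$ into singletons (fixed points) and unordered pairs (transpositions), with all remaining indices unused.

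Next I would split $\mc{P}_n$ according to the state of the index $n$ and count each block via a deletion bijection onto a smaller set of partial involutions. If $n$ is unused, deleting row and column $n$ yields an arbitrary partial involution on $[n-1]$, contributing $P_{n-1}$; if $n$ is a fixed point, the same deletion again yields an arbitrary element of $\mc{P}_{n-1}$, contributing another $P_{n-1}$. If instead $n$ is paired with some $j$, there are $n-1$ choices of partner $j \in [n-1]$, and after deleting the two rows and columns indexed by $n$ and $j$ and relabeling the surviving $n-2$ indices order-preservingly, one is left with an arbitrary partial involution on a set of size $n-2$; this accounts for $(n-1)P_{n-2}$. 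Summing the three cases gives $P_n = P_{n-1} + P_{n-1} + (n-1)P_{n-2} = 2P_{n-1} + (n-1)P_{n-2}$, and checking $P_0 = 1$ (the empty matrix) together with the direct count $P_1 = 2$ supplies the initial data needed to run the recurrence for $n > 1$.

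The argument has no serious obstacle; the only points meriting attention are that the three states of $n$ are genuinely exhaustive and pairwise disjoint—immediate from the defining property of a symmetric partial permutation matrix—and that each deletion map is a true bijection onto the partial involutions on the smaller index set, including the degenerate case at $n=2$ where the reduced set is empty and is counted by $P_0 = 1$. As an alternative, since this sequence is catalogued as \href{http://oeis.org/A005425}{A005425} and the combinatorics of $\mc{P}_n$ is developed in \cite{bagnocongruence}, one could simply cite the recurrence from the literature; but the deletion argument above is brief and matches the style of Theorem \ref{5:kcycles} and Corollary \ref{sectnumber}.
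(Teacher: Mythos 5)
Your proof is correct, but it does not follow the paper's route, because the paper gives no proof of Proposition \ref{sectrec} at all: the recurrence is stated as a known fact about self-inverse partial permutations (the sequence \href{http://oeis.org/A005425}{A005425}, with background from \cite{bagnocongruence} and Chapter 15 of \cite{CCA}), and the paper's own work is concentrated in the bijection of Theorem \ref{sectbijection}, which transfers the recurrence to the sect numbers $\epsilon_n$. Your deletion argument --- classifying the index $n$ as unused, fixed, or paired with one of the $n-1$ smaller indices, and checking that each case maps bijectively onto $\mc{P}_{n-1}$, $\mc{P}_{n-1}$, or $\mc{P}_{n-2}$ respectively --- is the standard involution-style count and is airtight: the three states are exhaustive and disjoint because a symmetric partial permutation matrix has at most one $1$ in row $n$, and symmetry then pins down the matching $1$ in column $n$; summing gives $P_n = 2P_{n-1} + (n-1)P_{n-2}$, consistent with $P_0=1$, $P_1=2$, $P_2=5$. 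What your approach buys is self-containedness: the paper's reader must chase the recurrence to the literature, while yours settles it in a few lines and in the same combinatorial idiom as Theorem \ref{5:kcycles} and Corollary \ref{sectnumber}. What the paper's citation approach buys is brevity and consistency with how it handles the analogous fact for restricted involutions (Proposition \ref{resinvrec}, attributed to Theorem 4.2 of \cite{hardtTroyka}).
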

 
In order to describe the bijection between $\mc{E}_n$ and $\mc{P}_n$, let $\pi$ be a partial involution with matrix $(x_{ij})$ for $1 \leq i, j \leq n$. We construct a clan $\gamma = c_1 c_2 \dots c_{2n}$ as follows:
\begin{itemize}
	\item[(i)] If $x_{ii} = 0$, then  $c_i = -$ and $c_{2n+1-i}=+$.  
	\item[(ii)] If $x_{ii} = 1$ for any $1 \leq i \leq n$, then  we have a transposition $(i, 2n+1-i)$ in the underlying involution of $\gamma$ which yields $c_i = c_{2n+1-i} = a$ for some $a \in \N$.
	\item[(iii)] If $x_{ij} = 1$ and $x_{ji}=1$ (by symmetry), then $(i, 2n+1-j)$ and $(j, 2n+1-i)$ are the corresponding transpositions in the underlying involution for our clan. Thus, $c_i=c_{2n+1-j} = a$ and $c_{j} = c_{2n+1-i}=b$ for some $a, b\in \N$.
\end{itemize}
For example, consider the matrix $(x_{ij}) = \begin{pmatrix}
	1 & 0&0\\
	0& 0& 1\\
	0& 1 & 0
	\end{pmatrix}$. Then since $x_{11} = 1$, we can take $c_1 = c_6 = 1$ in the corresponding clan. Moreover, since $x_{23}= 1$ and $x_{32} = 1$, we assign $c_2 = c_4 = 2$ and $c_3 =c_5 = 3$. Thus, the corresponding clan is $1 2 3 2 3 1$.

This algorithm can be reversed easily. Let us start with a clan $\gamma = c_1 c_2 \dots c_{2n}$ and define its associated partial involution matrix as the one with zeros everywhere except:
\begin{itemize}
	\item[(i)] If $c_{i} = c_{2n+1-i} = a\in \N$, then $x_{ii} = 1$.
	\item[(ii)] If $c_{i} = c_{2n+1-j} = a \in \N$ for $i\neq j$, then $x_{ij} =x_{ji} = 1$.
\end{itemize}

This algorithm and its reverse are clearly injective between partial involutions and clans from the largest sect, completing the bijection. 

\begin{nthm}\label{sectbijection}
	 Partial involutions on $n$ letters and skew-symmetric $(n, n)$-clans in the largest sect are in bijection.
\end{nthm}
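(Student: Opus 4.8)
The bijection has in effect already been written down: it is the explicit assignment $\Phi\colon \mc{P}_n \to \mc{E}_n$ sending a symmetric partial permutation matrix $(x_{ij})$ to the string $\gamma=c_1\cdots c_{2n}$ via rules (i)--(iii), together with the reverse assignment $\Psi$ given by its rules (i)--(ii). The plan is therefore to check three things: that $\Phi$ genuinely lands in $\mc{E}_n$, that $\Psi$ genuinely lands in $\mc{P}_n$, and that $\Phi$ and $\Psi$ are mutually inverse. Once these are established the theorem follows.

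First I would verify that $\Phi$ is well-defined. Each index $i \in [n]$ falls into exactly one of three classes determined by $(x_{ij})$: it carries a diagonal $1$ (a fixed point), it lies in a symmetric off-diagonal pair $x_{ij}=x_{ji}=1$ with $i\neq j$ (a $2$-cycle), or its row is zero (a free point). A routine count of positions then shows that $\gamma$ has length $2n$, that every natural number introduced appears exactly twice, and that the number of $+$'s equals the number of $-$'s, so $\gamma$ is an $(n,n)$-clan. Skew-symmetry $\gamma=-\mathrm{rev}(\gamma)$ is verified position by position: a free point contributes $c_i=-$, $c_{2n+1-i}=+$; a fixed point a matched pair on $\{i,2n+1-i\}$; and a $2$-cycle the two pairs $\{i,2n+1-j\}$ and $\{j,2n+1-i\}$. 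The set of matched pairs, together with the sign pattern, is invariant under simultaneously reversing positions and swapping $+ \leftrightarrow -$. Finally, the diagonal entries yield exactly $\Pi_2$ pairs and the off-diagonal entries yield exactly $\Pi_{1,2}$ families, so by the characterization of the largest sect recorded just before Corollary~\ref{sectnumber}, $\gamma \in \mc{E}_n$.

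The step I expect to be the main obstacle is the well-definedness of $\Psi$, namely that the matrix it produces is actually a partial permutation matrix, with at most one nonzero entry in each row and column. Here the restriction to the largest sect is essential. In a clan $\gamma \in \mc{E}_n$, every position $i \leq n$ carrying a natural number is matched either with $2n+1-i$, giving the single diagonal entry $x_{ii}=1$, or with some position $2n+1-j$ where $j \leq n$ and $j \neq i$, giving the single off-diagonal entry $x_{ij}=x_{ji}=1$; since each symbol occurs exactly twice, no further entry can appear in row or column $i$. The only way the procedure could fail would be if $i$ were matched with another index $\leq n$, but such a matching is exactly a $\Pi_{1,1}$ family, which is precisely what is forbidden in the largest sect. Symmetry of $(x_{ij})$ is immediate from the skew-symmetry of $\gamma$, so $\Psi(\gamma) \in \mc{P}_n$.

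It then remains to confirm that $\Psi \circ \Phi = \mathrm{id}$ and $\Phi \circ \Psi = \mathrm{id}$, which is a direct case-by-case comparison of the rules: diagonal entries correspond to $\Pi_2$ pairs, symmetric off-diagonal pairs to $\Pi_{1,2}$ families, and zero diagonal entries to signed positions, and each rule undoes the other. Since both maps are well-defined and mutually inverse, they furnish the desired bijection.
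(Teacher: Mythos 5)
Your proposal is correct and follows essentially the same route as the paper: the paper's proof consists of exactly the two explicit assignments you call $\Phi$ and $\Psi$, followed by the assertion that they are clearly mutually inverse. Your additional verifications (that $\Phi$ lands in $\mc{E}_n$, and that $\Psi$ produces a genuine symmetric partial permutation matrix precisely because $\Pi_{1,1}$ families are excluded from the largest sect) simply spell out the details the paper leaves implicit.
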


\begin{ncor}
	Taking $\epsilon_0=1$, the number of clans in the largest sect satisfies the recurrence relation
	\begin{equation*}
	\epsilon_n =2\epsilon_{n-1} + (n-1)\epsilon_{n-2},
	\end{equation*}
	and has exponential generating function 
	\begin{equation*}
	\sum_{n=0}^\infty \epsilon_n\frac{x^n}{n!} = e^{2x+\frac{x^2}{2}}.
	\end{equation*}
\end{ncor}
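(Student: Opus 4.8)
The plan is to deduce both assertions from results already in hand, routing the generating function through a first-order linear ODE. First I would invoke Theorem~\ref{sectbijection} to conclude that $\epsilon_n = P_n$ for every $n$, so that the recurrence of Proposition~\ref{sectrec} transfers verbatim to the $\epsilon_n$. The only bookkeeping required is to match the initial data: Proposition~\ref{sectrec} records $P_0 = 1$, agreeing with the stipulated $\epsilon_0 = 1$, and the two $1\times 1$ symmetric partial permutation matrices $(0)$ and $(1)$ give $P_1 = 2 = \epsilon_1$, consistent with the listed values. Hence $\epsilon_n = 2\epsilon_{n-1} + (n-1)\epsilon_{n-2}$ for all $n\geq 2$, which is the claimed recurrence.

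For the exponential generating function I would set $E(x) = \sum_{n\geq 0}\epsilon_n x^n/n!$ and translate the recurrence into a differential equation. Writing the recurrence as $\epsilon_{m+1} = 2\epsilon_m + m\,\epsilon_{m-1}$ for $m\geq 1$ and using $E'(x) = \sum_{m\geq 0}\epsilon_{m+1}x^m/m!$, I would split off the $m=0$ term and sum the two remaining pieces. The factor $m$ in $m\,\epsilon_{m-1}$ cancels one factorial and produces the term $xE(x)$, while the $2\epsilon_m$ term contributes $2\bigl(E(x)-\epsilon_0\bigr)$; the leftover constants combine as $\epsilon_1 - 2\epsilon_0 = 0$. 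This yields the clean linear ODE
\[
E'(x) = (2+x)\,E(x), \qquad E(0) = \epsilon_0 = 1.
\]

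Finally I would solve this ODE. Its unique solution is
\[
E(x) = \exp\!\left(\int_0^x (2+t)\,dt\right) = e^{2x + x^2/2},
\]
which one may equivalently verify by checking directly that $F(x) = e^{2x+x^2/2}$ satisfies $F'(x) = (2+x)F(x)$ with $F(0)=1$ and appealing to uniqueness of solutions of a first-order linear ODE (or, if one prefers a purely formal argument, by comparing power-series coefficients on both sides of $E' = (2+x)E$, which recovers exactly the recurrence).

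Since all the substantive content lives in the bijection (Theorem~\ref{sectbijection}) and the combinatorial recurrence (Proposition~\ref{sectrec}), I anticipate no genuine obstacle. The only place demanding care is the index bookkeeping in the generating-function step: applying the recurrence precisely on its range of validity $m\geq 1$, isolating the correct boundary terms, and confirming they cancel by virtue of the exact initial values $\epsilon_0 = 1$ and $\epsilon_1 = 2$.
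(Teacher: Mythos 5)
Your proposal is correct and follows essentially the same route as the paper: the recurrence is transferred from partial involutions via Theorem~\ref{sectbijection} and Proposition~\ref{sectrec}, and the exponential generating function is obtained by the ``straightforward calculation'' the paper alludes to, which you carry out explicitly via the ODE $E'(x)=(2+x)E(x)$, $E(0)=1$. Your index bookkeeping and the cancellation $\epsilon_1-2\epsilon_0=0$ check out, so the argument is complete.
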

\begin{proof}
	The recurrence follows from Theorem~\ref{sectbijection} and Proposition~\ref{sectrec}. The exponential generating function can be found by a straightforward calculation.
\end{proof}

Next we describe an order structure on partial involutions.  Let $B^-_m$ denote the group of invertible $m\x m$ lower triangular matrices, and $B_n$ the group of invertible upper triangular $n \x n$ matrices. $B^-_m \x B_n$ acts on the set of all $m \x n$ matrices with complex entries by $(a , b ) \cdot x = a x b^{-1}$. The double cosets of this action are indexed by the partial permutations.

 The group $B_n$ also acts on the set of symmetric $n \x n$ matrices $\bb{S}(n)$ by the \emph{congruence action}, which is defined by
\[ b \cdot s = (b^{-1})^t x b^{-1},\]
for $x\in \bb{S}(n)$.  The set of partial involutions $\mc{P}_n$ parametrize congruence $B_n$-orbits. In fact, for $\pi \in \mc{P}_n$, the congruence $B_n$-orbit $B_n\cdot \pi$ is exactly the intersection
$B_n^- \pi B_n \cap \bb{S}(n)$ (see \cite{szechtman}, Theorem 3.1). Moreover, the orbit closure order for congruence $B_n$-orbits is the same as the closure order on $B^-_n \x B_n$ double cosets restricted to the set of partial involutions (\cite{bagnocongruence}). This order can be described in terms of {rank-control matrices} associated to each partial permutation, which we define presently.
\begin{ndefn}\label{rkctl}
Let $\pi=(x_{ij})$ be an $m \times n$ matrix. For each $1 \leq k \leq m$ and $1 \leq l \leq n$, denote by $\pi_{k,l}$ the upper-left $k \times l$ submatrix of $\pi$. We denote by $R(\pi)$ the 
$m \times n$ matrix, whose $(k,l)$-entry is $ \text{rank} (\pi_{k,l})$, 
and call it the \emph{rank-control matrix} of $\pi$.
\end{ndefn}
\begin{nexap}
	If $\pi=	\begin{pmatrix}
	1 & 0&0\\
	0& 1& 0\\
	0&0&1
	\end{pmatrix}$, then $R(\pi) = \begin{pmatrix}
	1 & 1& 1\\
	1 & 2& 2\\
	1 & 2& 3
	\end{pmatrix}$.
\end{nexap}
We then define a partial order on partial permutations by comparing the individual entries of their rank-control matrices, which is to say
\[\pi \preceq \sigma \iff \rank(\pi_{k,l}) \leq \rank(\sigma_{k,l}) \quad \text{for all } k ,l . \]
Let ${C}_\pi$ and ${C}_\sigma$ denote the $B^-_m \x B_n$-orbits corresponding to partial permutations $\pi$ and $\sigma$. It follows from Theorem 15.31 of \cite{CCA} that
\[\pi \preceq \sigma \iff \ol{{C}_\pi} \sus \ol{{C}_\sigma}. \]
Thus, rank-control matrices capture all of the information of orbit closure relationships.
Corollary 5.4 of \cite{bagnocongruence} specializes this to say that the restriction of $\preceq$ to $\mc{P}_n$ exactly describes the closure order on congruence $B_n$-orbits as well. 

\subsection{A Conjectural Bruhat Order on $CI$ Clans}

In \cite{wyser2016bruhat}, Wyser gives a description of the closure order of $GL_p \x GL_q$-orbits in the flag variety of $GL_{p+q}$ in terms of statistics on the $(p,q)$-clans parametrizing these orbits.  We recall the statement and notation from that paper.

For any $(p,q)$-clan $\gamma=c_1 \cdots c_n$ and any $i$ or $i,j$ with $1\leq i <j \leq n$, we let
\begin{enumerate}[label=(\arabic*)]
\item $\gamma(i; +) =$ the total number of $+$ signs and pairs of equal natural numbers occurring among $c_1 \cdots c_i $;
\item $\gamma(i; -) =$ the total number of $-$ signs and pairs of equal natural numbers occurring among $c_1 \cdots c_i $;
 \item  $\gamma(i; j) =$ the number of pairs of equal natural numbers $c_s=c_t \in \N$ with $s \leq i < j< t$. 
 \end{enumerate}

\begin{nthm}[Theorem 1.2, \cite{wyser2016bruhat}] \label{BruhatClans}
Let $\gamma$, $\tau$ be $(p,q)$-clans, and let $Y_\gamma$, $Y_\tau$ be the corresponding $GL_p \x GL_q$-orbit closures in the flag variety of $GL_{p+q}$. Then $\gamma \leq \tau$ (meaning $Y_\gamma \sus Y_\tau$) if and only if the following three inequalities hold for all $i$, $j$:
\begin{enumerate}[label=(\arabic*)]
\item $\gamma(i;+)\geq \tau(i;+)$;
\item $\gamma(i;-)\geq \tau(i;-)$;
\item $\gamma(i;j)\leq \tau(i;j)$.
\end{enumerate}
\end{nthm}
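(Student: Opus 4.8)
The plan is to realize each $GL_p\times GL_q$-orbit as a rank stratum in the flag variety of $GL_{p+q}$ and to identify the three statistics with the three natural families of rank invariants of a flag relative to the fixed splitting $\C^{p+q}=V_+\oplus V_-$ with $\dim V_+=p$ and $\dim V_-=q$. Writing $O_\gamma$ for the orbit so that $Y_\gamma=\overline{O_\gamma}$, one expects, for a representative flag $F_\bullet\in O_\gamma$, the dictionary
\[ \gamma(i;+)=\dim(F_i\cap V_+),\qquad \gamma(i;-)=\dim(F_i\cap V_-), \]
with $\gamma(i;j)$ equal to the rank of a linear map coupling $F_i$ and $F_j$ through the two projections $p_\pm$ (morally, the number of ``rotation planes'' whose two legs straddle the window $(i,j\,]$). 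That these data are constant on orbits and separate them is the content of the Matsuki--Oshima/Yamamoto parametrization, which I would invoke rather than reprove. The first genuine task is to pin down exactly the map whose rank is $\gamma(i;j)$ so that the three families together constitute a complete orbit invariant; the single-index statistics are transparent, but the mixed one requires care.

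With the dictionary in hand, the necessity direction is immediate from semicontinuity. If $Y_\gamma\subseteq Y_\tau$, then a representative flag of $O_\gamma$ is a limit of flags in $O_\tau$. Intersection dimensions $\dim(F_i\cap V_\pm)$ are upper semicontinuous, so they can only increase in the limit, giving $\gamma(i;+)\ge\tau(i;+)$ and $\gamma(i;-)\ge\tau(i;-)$; the rank of a linear map is lower semicontinuous, so it can only drop in the limit, giving $\gamma(i;j)\le\tau(i;j)$. This fixes the directions of all three inequalities and matches the self-consistency check that a more degenerate clan should have larger intersection dimensions and smaller coupling ranks.

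For sufficiency I would show that the orbit closure $Y_\tau$ equals \emph{exactly} the variety $R_\tau$ cut out by the rank inequalities
\[ \dim(F_i\cap V_+)\ge\tau(i;+),\quad \dim(F_i\cap V_-)\ge\tau(i;-),\quad \rank(\cdots)_{i,j}\le\tau(i;j). \]
The inclusion $Y_\tau\subseteq R_\tau$ is precisely the necessity argument above. The reverse inclusion $R_\tau\subseteq Y_\tau$ is the heart of the matter: once $R_\tau$ is shown to be irreducible with $O_\tau$ dense in it, any flag of $O_\gamma$ automatically satisfies $\tau$'s inequalities (that is literally what the three conditions assert) and therefore lies in $R_\tau=Y_\tau$, yielding $Y_\gamma\subseteq Y_\tau$. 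To obtain irreducibility and density I would induct on $n=p+q$ using the projection $Fl(\C^n)\to\mathbb{P}(\C^n)$ that remembers the line $F_1$: the fibre is the flag variety of $\C^n/F_1$, the group $GL_p\times GL_q$ acts compatibly, and one tracks how the three statistics of a clan restrict after deleting its first (or a matched) symbol. This is the Richardson--Springer description of symmetric-subgroup orbit closures via minimal parabolic fibrations, and it is what lets one build an explicit degeneration of $O_\tau$ onto $O_\gamma$ whenever the inequalities hold.

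The hard part will be exactly this sufficiency step, namely proving that the three families of rank inequalities are \emph{complete}: that $R_\tau$ has no spurious components and no further higher-order rank conditions are needed. The subtlety is again the mixed statistic $\gamma(i;j)$, whose defining map couples two levels of the flag, so one must verify that imposing the $+$, $-$, and straddle conditions simultaneously still yields an irreducible locus of the correct dimension. A purely combinatorial alternative would bypass the geometry: describe the weak-order cover relations on clans (the Richardson--Springer elementary moves), check that each cover respects the three inequalities, and then show by induction that whenever $\gamma$ is strictly below $\tau$ in the inequality order some cover move produces an intermediate clan, so that the inequality order is generated by, and hence equal to, the Bruhat order obtained from the weak order by McGovern's recursive procedure. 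Either route reduces to the same crux: establishing completeness of the inequality list.
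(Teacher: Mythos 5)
This statement is not proved in the paper at all: it is quoted verbatim (as Theorem 1.2 of \cite{wyser2016bruhat}) and used as an external input to the proof of Theorem \ref{poset}. So the comparison here is necessarily with Wyser's published proof, and against that standard your proposal is a plan rather than a proof, with the decisive step left open. Your necessity direction is fine in outline: the dictionary $\gamma(i;+)=\dim(F_i\cap V_+)$, $\gamma(i;-)=\dim(F_i\cap V_-)$ is correct (each completed pair of natural numbers contributes one dimension to each intersection), and the semicontinuity argument then gives the three inequalities with the right orientations. But you never pin down the linear map whose rank realizes the mixed statistic $\gamma(i;j)$ --- you explicitly defer this (``the first genuine task is to pin down exactly the map'') --- and without it even the necessity direction for condition (3) is not actually established, only plausibly described.

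The more serious gap is sufficiency, which you yourself flag as ``the heart of the matter'' and ``the hard part.'' Showing that the locus $R_\tau$ cut out by the rank inequalities is irreducible with $O_\tau$ dense, or alternatively running the combinatorial induction through weak-order covers, is the entire content of the theorem: everything before it only shows that the closure order \emph{refines} the inequality order, not that the two coincide. Neither of your two proposed routes is carried out --- no fibration argument is executed, no cover-relation analysis is done, and no induction is set up in any checkable form. Wyser's actual proof spends essentially all of its effort on precisely this completeness step (via a careful combinatorial argument producing intermediate clans when the inequalities hold), so what your proposal omits is not a routine verification but the theorem itself. As written, the proposal correctly identifies the structure of the problem and the crux, but it does not close it.
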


Just as skew-symmetric $(n,n)$-clans can be viewed as a subset of all $(n,n)$-clans, the isotropic flag variety $X:=Sp_{2n}/B$ can be viewed as a subvariety of the flag variety $G'/B'$ for $G'=GL_{2n}$ and its subgroup $B'$ of upper triangular matrices. Indeed, the $L$-orbit in $Sp_{2n}/B$ corresponding to the skew-symmetric clan $\gamma$ is exactly the intersection $Q_\gamma'\cap X$,  where $Q_\gamma'$ is the $L':=GL_n\x GL_n$-orbit in $G'/B'$ corresponding to $\gamma$ (see \cite{wyserThesis}).

It is then natural to wonder if the (full) closure order on skew-symmetric $(n,n)$-clans is simply the restriction of the closure order on all $(n,n)$-clans. This is stated as Conjecture 3.6 in \cite{wyserKorbit}, where it is reported that the conjecture has be verified computationally up to $n=7$. The analogous conjecture is, in fact, made for two other classical symmetric pairs: $(SO_{2n+1}, S(O_{2p} \x O_{2q+1}))$ and $(Sp_n, Sp_p\x Sp_q)$. However, neither of the symmetric subgroups in these cases are Levi factors of a parabolic subgroup, so they hold less interest for our purposes. 

It is worth noting that the veracity of the conjecture for $(Sp_n, Sp_p\x Sp_q)$ was claimed in \cite{mcgovern2009pattern} on the basis of the recursive process for deducing the full closure order relations from the weak order. However, very little detail is provided, and it is also claimed in that paper that the same holds for $(SO_{2n}, GL_n)$, though Wyser points out that this fails with a specific example when $n=4$ (\cite{wyserKorbit}, p. 165).

A detailed proof of this conjecture for $(Sp_{2n}, GL_n)$ can likely be achieved via methods similar to \cite{wyser2016bruhat}, but we forego this at present. Instead, we will just assign a partial order on skew-symmetric $(n,n)$-clans by
\[\gamma \leq_C \tau \iff Y_\gamma \sus Y_\tau \quad (\text{so }\gamma\leq \tau), \]
with $Y_\gamma$ and $Y_\tau$ as before, and call $\leq_C$ the \emph{conjectural $CI$ Bruhat order}. With this in hand we can state and prove a final result, analogous to \cite{bcSects}, Theorem 1.7.

\begin{nthm} \label{poset}
As posets $(\mc{E}_n, \leq_C)\cong (\mc{P}_n, \preceq)$, that is the poset of the largest sect within skew-symmetric $(n,n)$-clans with the conjectural $CI$ Bruhat order is isomorphic to the poset of partial involutions ordered by congruence $B_n$-orbit closures. 
\end{nthm}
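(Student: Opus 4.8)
The plan is to combine the explicit bijection $\Phi\colon \mc{P}_n \to \mc{E}_n$ of Theorem \ref{sectbijection} with the two available combinatorial descriptions of the orders, and to show these descriptions agree under $\Phi$. On the partial-involution side, $\preceq$ is the rank-control order (Definition \ref{rkctl}): $\pi \preceq \sigma$ iff $\rank(\pi_{k,l}) \le \rank(\sigma_{k,l})$ for all $1 \le k,l \le n$. On the clan side, $\leq_C$ is by definition Wyser's order, so $\gamma \leq_C \tau$ holds iff the three families of inequalities of Theorem \ref{BruhatClans} hold for all $i < j$ in $[2n]$. Since $\Phi$ is already a bijection of sets, it suffices to prove that, writing $\gamma = \Phi(\pi)$ and $\tau = \Phi(\sigma)$, Wyser's conditions for $(\gamma,\tau)$ are equivalent to the rank-control comparison for $(\pi,\sigma)$.

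The crux is a dictionary translating Wyser's statistics into submatrix ranks of $\pi$. For $\gamma = \Phi(\pi)$, the matching pairs of natural numbers (the ``arcs'' of the clan) are in bijection with the nonzero entries of $\pi$: each $u \le n$ with $\pi(u) = w \ne 0$ yields exactly one arc with left endpoint $u$ and right endpoint $2n+1-w$, where diagonal entries give $\Pi_2$ pairs and off-diagonal symmetric entries give $\Pi_{1,2}$ families. Consequently such an arc crosses the cut between positions $i$ and $j$ — contributing to $\gamma(i;j)$ — precisely when $u \le i$ and $\pi(u) \le 2n-j$, whence
\[
\gamma(i;j) = \rank(\pi_{\,i,\,2n-j}) \qquad \text{for } i \le n \le j.
\]
As $i$ runs over $[1,n]$ and $j$ over $[n,2n]$, the pair $(i,\,2n-j)$ sweeps out all of $[1,n]\times[0,n]$, so this subfamily of Wyser's third condition is exactly the statement $\rank(\pi_{k,l}) \le \rank(\sigma_{k,l})$ for all $k,l \le n$, that is, $\pi \preceq \sigma$.

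It then remains to check that every other Wyser inequality is automatically implied by $\pi \preceq \sigma$. Here the defining feature of the big sect — that its clans have no $\Pi_{1,1}$ families, so no pair of equal naturals lies entirely within the first $n$ positions — forces $\gamma(i;+)$ to depend only on $i$ (it equals $0$ for $i \le n$ and $i-n$ for $i > n$), making condition (1) a trivial equality for all members of $\mc{E}_n$. A direct count shows $\gamma(i;-) = i - \rank(\pi_{i,n})$ for $i \le n$ and $\gamma(i;-) = n - \rank(\pi_{2n-i,\,n})$ for $i > n$, so condition (2) reduces to comparisons in the $l = n$ column of the rank-control matrices; likewise the remaining ranges of condition (3) (where $j \le n$, or $i > n$) collapse to comparisons in row or column $n$, using that the rank-control matrix of a symmetric matrix is itself symmetric because $\pi_{k,l}$ and $\pi_{l,k}$ are mutual transposes. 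All of these are subsumed by the full rank-control comparison, so Wyser's conditions hold for $(\gamma,\tau)$ iff $\pi \preceq \sigma$, which yields the desired poset isomorphism $(\mc{E}_n,\leq_C)\cong(\mc{P}_n,\preceq)$.

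The main obstacle is establishing the dictionary cleanly and, above all, verifying that no independent order relation is lost or spuriously introduced when passing between the $2n$-indexed Wyser inequalities and the $n\times n$ rank-control conditions: one must confirm both that the ``in-range'' third conditions recover \emph{every} entry of the rank-control matrix and that \emph{all} ``out-of-range'' conditions are genuinely redundant. This bookkeeping leans essentially on the big-sect structure (the absence of $\Pi_{1,1}$ families) and on the symmetry of $\pi$, while the arc-to-entry correspondence is what makes the crossing statistic $\gamma(i;j)$ collapse to a single submatrix rank.
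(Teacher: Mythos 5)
Your proposal is correct and takes essentially the same approach as the paper's proof: both use the bijection of Theorem \ref{sectbijection} and convert Wyser's statistics $\gamma(i;+)$, $\gamma(i;-)$, $\gamma(i;j)$ into ranks of northwest submatrices of the partial involution matrix, with condition (1) trivial on the big sect, condition (2) supplying the row/column-$n$ comparisons, and the range $i\leq n< j$ of condition (3) supplying all remaining rank-control entries. The only nitpick is that your sweep of pairs $(i,2n-j)$ misses the single entry $(n,n)$ because of the constraint $i<j$, but that comparison is recovered from condition (2) at $i=n$ via your own formula $\gamma(n;-)=n-\rank(\pi_{n,n})$, so the equivalence stands.
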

\begin{proof} Given a clan $\gamma \in \mc{E}_n$, let $\pi^\gamma$ denote the partial involution matrix obtained from $\gamma$ by the algorithm described before Theorem \ref{sectbijection}. We will show that the conditions of Theorem \ref{BruhatClans} on clans $\gamma$ and $\tau$ translate to all of the necessary conditions on the rank-control matrices of $\pi^\gamma$ and $\pi^\tau$, so that the orders are the same. This is achieved simply by identifying the statistics $\gamma(i;+)$, $\gamma(i; -)$ and $\gamma(i;j)$ as ranks of particular submatrices of the associated partial involution.  As in Definition \ref{rkctl}, let $\pi^\gamma_{k,l}$ denote the northwest $k \x l$ submatrix of $\pi^\gamma$, and $R(\pi^\gamma)$ the rank-control matrix of $\pi^\gamma$ so that the $(k,l)$-entry of $R(\pi^\gamma)$ is $\rank{\pi^\gamma_{k,l}}$. 

First we examine $\gamma(i;+)$. Because $\gamma=c_1 \cdots c_{2n}$ has  base clan ${-} {-} \dots {-} {-}{+}{+} \dots {+}{+}$, the value of $\gamma(i;+)$ is zero for all $i\leq n$, and then because each symbol among the last $n$ in $\gamma$ is either a $+$ or the second of a natural number pair, we find that $\gamma(i;+)=\tau(i;+)$ for all $i$ from $1$ to $2n$. Thus, this statistic contains no ordering information within the largest sect.

Next, consider $\gamma(i; -)$. As $i$ increases from 1 to $n$, so does $\gamma(i; -)$ unless there is a natural number at $c_i$. A natural number at $ c_i$ indicates a matrix entry of $1$ in the $i$\ts{th} row of $\pi^\gamma$ for $1\leq i \leq n$. As $i$ increases past $n$, $\gamma(i; -)$ increases the rest of the way up to $n$, and a natural number at $c_i$ indicates a matrix entry of 1 in the $(2n+1-i)$\ts{th} column. Then, 
\[ \gamma(i;-) = \begin{cases}
i - \rank( \pi^\gamma_{i,n}) & \text{for } i \leq n, \\
n- \rank ( \pi^\gamma_{n,2n+1-i}) & \text{for } i > n.
\end{cases}
\] 
Because the ranks of submatrices appear negatively above, $\gamma(i;-)\geq \tau(i; -)$ if and only if $\rank(\pi^\gamma_{i,n})\leq\rank(\pi^\tau_{i,n})$ and $\rank(\pi^\gamma_{n,i})\leq\rank(\pi^\tau_{n,i})$ for all $1\leq i \leq n$. This covers the rank conditions along the south and east borders of the matrices.

Next, consider $\gamma(i,j)$. By similar reasoning, it is not hard to see that 
\[\gamma(i;j)=
\begin{cases}
\rank (\pi^\gamma_{i,n} )& \text{for } i< j \leq n \\
\rank (\pi^\gamma_{i, 2n-j} )& \text{for } i\leq n < j \\
\rank (\pi^\gamma_{n,2n-j} )& \text{for } n<i< j .
\end{cases}
\]
The case where $i\leq n < j$ shows that $\gamma(i;j)\leq \tau(i;j)$ for all $1\leq i,j<n$ if and only $\rank(\pi^\gamma_{i,m}) \leq \rank(\pi^\tau_{i,m}) $ for all $1\leq i,m<n$. This covers the rank conditions everywhere else in the partial involution matrices, so we see that $\gamma\leq_C \tau$ if and only if $\pi^\gamma \preceq \pi^\tau$, completing the poset isomorphism.

\end{proof}

\begin{nrk} $(\mc{E}_n,\leq_C)$ is also a maximal upper order ideal of the poset of all skew-symmetric $(n,n)$-clans under $\leq_C$. Its minimal element is the base clan and the maximal element is $1 2 \cdots (n-1) n n  (n-1) \cdots 2 1 $, which corresponds to the dense $B$-orbit of $G/L$. This follows from arguments identical to the proof of Proposition 5.4 in \cite{bcSects}.
\end{nrk}

\begin{nrk} The proof of Theorem \ref{poset} can be adapted to show that the big sect of the type $AIII$ symmetric space ($GL_n/GL_p \x GL_q$) is isomorphic to the closure order on $B^-_p \x B_q$ double cosets of the $p \x q $ complex matrices, indexed by partial permutations. This was proved only for the case $p=q$ in \cite{bcSects}.
\end{nrk}


\vspace{1cm}
\textbf{Acknowledgements.}
We thank Mahir Bilen Can for many constructive suggestions. We are also grateful to the anonymous referee for their very careful reading and constructive suggestions which improved the quality of our paper.
\medskip

\end{document}